\def\blfootnote{\xdef\@thefnmark{}\@footnotetext}
\DeclareMathOperator{\Vor}{Vor}
\DeclareMathOperator{\Sym}{Sym}
\DeclareMathOperator{\Pic}{Pic}
\DeclareMathOperator{\Div}{Div}
\DeclareMathOperator{\divi}{div}
\DeclareMathOperator{\Spec}{Spec}
\DeclareMathOperator{\Spf}{Spf}
\DeclareMathOperator{\chara}{char}
\DeclareMathOperator{\ord}{ord}
\newtheorem{theorem}{Theorem}
\newtheorem{maintheorem}{Theorem}
\newtheorem{lemma}[theorem]{Lemma}
\newtheorem{proposition}[theorem]{Proposition}
\theoremstyle{definition} 
\newtheorem{definition}[theorem]{Definition}
\newtheorem{example}[theorem]{Example}
\theoremstyle{remark} 
\newtheorem{remark}[theorem]{Remark}
\numberwithin{theorem}{section}
\newcommand{\bij}{\stackrel{\sim}{\longrightarrow}}
\title{A Lefschetz Hyperplane Theorem for non-Archimedean Jacobians}
\author{Tif Shen}
\date{\today}
\address{Department of Mathematics, Yale University}
\email{jif.shen@gmail.com}
\begin{document} 
\maketitle \vspace{-25pt} 
\begin{abstract} We establish a Lefschetz hyperplane theorem for the Berkovich analytifications of Jacobians of curves over an algebraically closed non-Archimedean field. Let $J$ be the Jacobian of a curve $X$, and let $W_d \subset J$ be the locus of effective divisor classes of degree $d$. We show that the pair $(J^{an},W_d^{an})$ is $d$-connected, and thus in particular the inclusion of the analytification of the theta divisor $\Theta^{an}$ into $J^{an}$ satisfies a Lefschetz hyperplane theorem for $\mathbb{Z}$-cohomology groups and homotopy groups. A key ingredient in our proof is a generalization, over arbitrary characteristics and allowing arbitrary singularities on the base, of a result of Brown and Foster for the homotopy type of analytic projective bundles. 
\end{abstract}

\blfootnote{This research was partially supported by NSF grant CAREER DMS-1149054 (PI: Sam Payne).}

%

\section{Introduction} 
The main objective of this paper is to establish, for Berkovich analytifications of Jacobians of curves, a Lefschetz hyperplane theorem for $\mathbb{Z}$-cohomology and homotopy groups. 
Let $K$ be an algebraically closed field complete with respect to a non-trivial non-Archimedean norm $|\cdot|_K$. Let $X$ be a smooth projective curve over $K$ of genus $g$, and let $J$ be its Jacobian. 
Fix a basepoint in $X$, the Abel-Jacobi map realizes the locus $W_d \subset \Pic^d(X)$ of effective divisor classes of degree $d$ as a subset of $J$, which gives us an inclusion $W_d^{an} \subset J^{an}$.

\begin{maintheorem}[Lefschetz for analytic Jacobians]\label{nonarch wd} For $1 \leq d\leq g-1$, the pair $(J^{an},W_d^{an})$ is $d$-connected, i.e.
	\[\pi_i(J^{an},W_d^{an})=0\mathrm{\ for\ }i \leq d.\]
	
\end{maintheorem}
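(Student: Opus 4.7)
The natural tool is the Abel--Jacobi map $u_d \colon \Sym^d(X) \to J$, which factors as the surjection $\Sym^d(X) \twoheadrightarrow W_d$ followed by the inclusion $W_d \hookrightarrow J$. By Riemann--Roch, the fiber of $u_d$ over $[D] \in W_d$ is the complete linear system $|D| \cong \mathbb{P}(H^0(X, \mathcal{O}_X(D)))$; for $1 \leq d \leq g-1$, a general fiber is a point and positive-dimensional fibers occur only over the Brill--Noether loci $W^r_d$, on which $u_d$ restricts to a $\mathbb{P}^r$-bundle. The plan is to analyze $u_d^{an}$ in two stages: first, show that $u_d^{an} \colon \Sym^d(X)^{an} \to W_d^{an}$ is a weak homotopy equivalence; second, show that the composite $u_d^{an} \colon \Sym^d(X)^{an} \to J^{an}$ is $d$-connected as a map. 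These two facts together force the inclusion $W_d^{an} \hookrightarrow J^{an}$ to be $d$-connected, which via the long exact sequence of the pair is exactly the desired vanishing $\pi_i(J^{an}, W_d^{an}) = 0$ for $i \leq d$.

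For the first step I would invoke the generalized Brown--Foster theorem promised in the abstract. The key point is that every fiber of $u_d$ is a projective space and $\mathbb{P}^n_{an}$ is contractible in the Berkovich topology, so one expects the proper map $u_d^{an}$ to be a weak equivalence onto $W_d^{an}$. Since $u_d$ is not literally a projective bundle---fiber dimensions jump along the Brill--Noether stratification, making $W_d$ singular---this is precisely where the generalization to arbitrary singular bases is needed, presumably by stratifying $W_d$ into the loci $W^r_d$ and gluing the equivalences obtained on each stratum. For the second step the natural approach is via the canonical Berkovich skeletons: $J^{an}$ strongly deformation retracts onto the tropical Jacobian $\Sigma_J \cong H_1(\Gamma, \mathbb{R})/H_1(\Gamma, \mathbb{Z})$ (a real torus), where $\Gamma$ is the metric skeleton of $X^{an}$, and $\Sym^d(X)^{an}$ retracts onto a $d$-dimensional polyhedral complex closely related to the tropical symmetric product $\Sym^d(\Gamma)$. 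The Abel--Jacobi map descends to a tropical Abel--Jacobi map between these skeletons, so that the $d$-connectedness of $u_d^{an}$ reduces to a combinatorial statement about effective divisors on $\Gamma$ and their classes in $\Sigma_J$.

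The main obstacle is the second step. The $d$-connectedness of the tropical Abel--Jacobi map is a genuinely nontrivial combinatorial statement, and one must verify that the chosen deformation retractions onto the skeletons are compatible with $u_d$ so that the passage between analytifications and skeletons is rigorous. A secondary technical concern is Step 1: the generalized Brown--Foster theorem must be robust enough to handle the Brill--Noether stratification of the singular base $W_d$ rather than only a classical projective bundle over a smooth base, which is exactly the technical upgrade the paper advertises.
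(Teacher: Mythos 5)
Your proposal follows essentially the same route as the paper: reduce to the composite $\Sym^d(X)^{an} \to W_d^{an} \hookrightarrow J^{an}$, show the first map is a homotopy equivalence via the generalized projective-bundle result (handling the Brill--Noether stratification, which is precisely the upgrade over Brown--Foster), and then descend to the polyhedral skeletons to verify $d$-connectedness of the tropical Abel--Jacobi map. The ``genuinely nontrivial combinatorial statement'' you flag as the main obstacle is exactly the content the paper supplies: one shows $\Sym^d(\Gamma) \to W_d(\Gamma)$ is a homotopy equivalence by the Vietoris--Begle--Smale theorem applied to the fibers, which are tropical complete linear series and are contractible by Haase--Musiker--Yu, and then identifies $\Sym^d(\Gamma)$, via a theorem of Ong on symmetric products of wedges of circles, with the $d$-skeleton of the cubical torus, from which the homotopy-group comparison with $J(\Gamma)$ is immediate.
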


\noindent{}In particular, the inclusion $W_d^{an} \hookrightarrow J^{an}$ induces isomorphisms between $\mathbb{Z}$-cohomology groups of dimension $<d$, and an injection in dimension $d$.

Let $D$ denote an ample divisor on a smooth projective variety $Y$ of dimension $n$ over $K$. As a consequence of the $\ell$-adic Lefschetz hyperplane theorem \cite[\S 4.1.6]{Del80} and of Berkovich's weight 0 comparison theorem \cite[Theorem 1.1]{Ber00}, the inclusion $D^{an} \hookrightarrow Y^{an}$ induces isomorphisms between $\mathbb{Q}$-cohomology groups of dimensions $< n-1$, and an injection in dimension $n-1$. However, Payne noticed that the previous statement does not hold in general if we replace $\mathbb{Q}$ with $\mathbb{Z}$, or if we replace cohomology groups with homotopy groups \cite[Example 16]{Pay15}. Let $\Theta$ denote the theta divisor of $J$. In the same paper, Payne suggested that the pair $(J^{an},\Theta^{an})$ may satisfy a Lefschetz hyperplane theorem for $\mathbb{Z}$-cohomology and homotopy groups \cite[Example 15]{Pay15}. Since $W_{g-1}$ is a translate of $\Theta$, as a special case of Theorem \ref{nonarch wd}, we show that that a Lefschetz theorem for $\mathbb{Z}$-cohomology and homotopy groups indeed holds for the pair $(J^{an},\Theta^{an})$. 

In general, the Poincar\'e formula gives us the following equality of fundamental classes
\[[W_d] = \frac{1}{(g-d)!}\bigcap^{g-d}[\Theta].\] 
As subvarieties of the Jacobian, $W_d$ is contained in $W_{d+1}$. It thus follows from the Poincar\'e formula that $W_d$ is ample in $W_{d+1}$: the restriction of $\Theta$ is ample and positive multiples of an ample divisor are ample. By \cite[Corollary IV.4.5]{ACGH85}, the singular locus of $W_{d+1}$ is equal to the locus $W^1_{d+1}$ of divisor classes degree degree $d+1$ and of rank at least $1$, and the latter is contained in $W_d$ (see \cite[p. 250]{GH11}) . Over $\mathbb{C}$, the classical Lefschetz hyperplane theorem \cite[Theorem 7.4]{Mil63} implies that the pair $(W_{d+1},W_d)$ is $d$-connected. Theorem \ref{nonarch wd} can therefore also be viewed as a non-Archimedean analog of the Lefschetz hyperplane theorem for the pair $(W^{an}_{d+1},W^{an}_d)$.

\subsection{Tropicalization} The main technical step in the proof Theorem \ref{nonarch wd} consists in showing that the natural map from $W^{an}_d$ to its tropicalization is a homotopy equivalence. Recall the analytification $X^{an}$ has a skeleton $\Gamma$, which is a metric graph. We know from \cite{Ber90,BR13} that the Jacobian $J(\Gamma)$ of $\Gamma$ is the skeleton of $J^{an}$. In particular, there is a canonical retraction map from $J^{an}$ onto $J(\Gamma)$. Let $W_d(\Gamma)$ denote the image in $J(\Gamma)$ of $W^{an}_d$.

\begin{maintheorem}\label{tropicalize wd} The map $W^{an}_d \rightarrow W_d(\Gamma)$ is a homotopy equivalence for all $d$.
\end{maintheorem}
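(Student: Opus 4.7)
The plan is to compare $W^{an}_d$ and $W_d(\Gamma)$ through the symmetric product of $X$ via the Abel--Jacobi map. Consider the commutative square
\[
\begin{tikzcd}
(X^{(d)})^{an} \arrow[r, "\alpha^{an}"] \arrow[d, "\pi_1"'] & W^{an}_d \arrow[d, "\pi_2"] \\
\Sym^d(\Gamma) \arrow[r, "\alpha^{\trop}"'] & W_d(\Gamma),
\end{tikzcd}
\]
where $\alpha\colon X^{(d)} \to W_d$ is the Abel--Jacobi map with tropical counterpart $\alpha^{\trop}$, and the vertical maps are the canonical retractions onto the skeleta. If I can establish that $\pi_1$, $\alpha^{an}$, and $\alpha^{\trop}$ are all homotopy equivalences, then so is $\pi_2$, which is the claim.

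For $\pi_1$, Berkovich's canonical strong deformation retraction $X^{an} \to \Gamma$ is $S_d$-equivariant after passage to the $d$-fold product, and quotienting by $S_d$ gives a deformation retraction of $(X^{(d)})^{an} = (X^{an})^d/S_d$ onto $\Gamma^d/S_d = \Sym^d(\Gamma)$.

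For $\alpha^{an}$, the fiber over $[D] \in W_d$ is the analytified complete linear system $|D|^{an} \cong (\mathbb{P}^{r(D)})^{an}$, which is contractible. Over each Brill--Noether stratum $W^r_d \setminus W^{r+1}_d$ on which the rank $r(D)$ is constant, $\alpha$ restricts to a Zariski-locally trivial $\mathbb{P}^r$-bundle, and the paper's generalization of Brown--Foster for analytic projective bundles over singular bases yields a stratumwise homotopy equivalence between the preimage in $(X^{(d)})^{an}$ and the stratum in $W^{an}_d$. I would then glue these equivalences inductively, working from the deepest stratum outward, using contractibility of $|D|^{an}$ at limit points to patch the stratumwise deformation retractions coherently. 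The analogous argument, using the tropical rank stratification of $W_d(\Gamma)$ and the fact that tropical complete linear systems are contractible polyhedral cell complexes, shows that $\alpha^{\trop}$ is a homotopy equivalence.

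The main obstacle is the stratum-by-stratum gluing for $\alpha^{an}$. Since $r(D)$ jumps upward along Brill--Noether loci, local trivializations of $\alpha^{an}$ as a projective bundle do not extend across strata, and the two sides of the square differ in how fiber dimensions vary. Patching the stratumwise equivalences into a global homotopy equivalence will require careful use of the upper semicontinuity of $r(D)$ together with an explicit description of how $|D|^{an}$ degenerates as $[D]$ specializes to a more special class, all within the framework provided by the singular-base projective bundle result mentioned in the abstract.
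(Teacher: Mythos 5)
Your commutative square is exactly the one the paper uses, and your handling of the left vertical arrow $\pi_1$ (the $S_d$-equivariance of Berkovich's retraction, then quotienting) matches the paper's Theorem \ref{symskel}. The gap is in your treatment of the two horizontal arrows, and in particular in how you read the scope of Theorem \ref{projective fibers}.

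You treat the Brown--Foster generalization as applying only to honest projective bundles, obtain a homotopy equivalence \emph{stratum by stratum}, and then propose an inductive gluing from the deepest Brill--Noether stratum outward. You correctly flag this gluing as the hard point --- and indeed it is not clear how to make it work, since a stratumwise deformation retraction gives no control over how a homotopy on $\Sigma^r \to \omega^r_P$ interacts with the closure of deeper strata, and the fiber dimension genuinely jumps. But the paper never does this gluing. The hypothesis of Theorem \ref{projective fibers} is precisely the existence of a finite stratification of the target into pieces over which $f$ is a projective bundle (property $(\dagger)$), and the conclusion is a \emph{global} homotopy equivalence. The reason no gluing is needed is that the proof does not build up a homotopy inverse stratum by stratum; it instead applies the Vietoris--Begle--Smale mapping theorem: a proper surjection between locally compact, metrizable, locally contractible spaces with countable dense subsets and contractible fibers is a weak homotopy equivalence, hence (via Hrushovski--Loeser giving CW-homotopy type and Whitehead) a homotopy equivalence. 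The stratification is used only to identify every analytic fiber as $(\mathbb{P}^{r_i}_{k(p)})^{an}$, which is contractible; once that is known one applies VBS to $f^{an}$ all at once. So Lemma \ref{ansymwd} follows directly from Theorem \ref{projective fibers} applied to $\Sym^d(X)\to W_d$, with no induction on strata.

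The same remark applies to $\alpha^{\trop}$: the paper's Proposition \ref{trophomotopy} again invokes Vietoris--Begle--Smale (plus Whitehead), using contractibility of tropical complete linear series from Haase--Musiker--Yu, rather than a stratumwise argument followed by patching. So while you have identified the correct diagram, the correct fiber-contractibility facts, and the correct auxiliary stratification, you are missing the one tool --- the VBS mapping theorem --- that turns ``contractible fibers over a stratified base'' into ``homotopy equivalence'' without any gluing, and your proposed substitute (stratumwise patching) is not carried out and would be genuinely difficult to make rigorous.
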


\noindent{}It suffices then to show that the pair $(J(\Gamma),W_d(\Gamma))$ is $d$-connected. Since the retraction of $J^{an}$ onto $J(\Gamma)$ is compatible with the Abel-Jacobi maps of $X^{an}$ and $\Gamma$ \cite[Proposition 6.1]{BR13}, $W_d(\Gamma)$ is identified with the locus of effective divisor classes of degree $d$ on $\Gamma$. In particular, there is a natural surjection $\Sym^d(\Gamma) \rightarrow W_d(\Gamma)$, which we show to be a homotopy equivalence. The desired statement for the pair $(J(\Gamma),W_d(\Gamma))$ then follows by comparing the homotopy groups of $\Sym^d(\Gamma)$ with the homotopy groups of $J(\Gamma)$ (see \S \ref{sec trop sym}). 


\subsection{Morphisms with contractible fibers} Let $\Sym^{d,an}(X)$ denote the analytification of the $d$-th symmetric product $\Sym^d(X)$ of $X$. A crucial part of our proof of Theorem \ref{tropicalize wd} consists in showing the natural map $\Sym^{d,an}(X) \rightarrow W_d^{an}(X)$ is also a homotopy equivalence. Since the fiber over a divisor class $[D] \in W_d(X)$ is the projective space $|D|$, we obtain this homotopy equivalence as a special case of the following theorem. Let $K$ be a (not necessarily algebraically closed) field complete with respect to a non-trivial non-Archimedean norm.

\begin{maintheorem}\label{projective fibers} Let $f: X\rightarrow Y$ be a surjective morphism of projective $K$-varieties. Suppose that there is a finite stratification $Y = \coprod_i Y_i$ such that $f:X\times_Y Y_i \rightarrow Y_i$ is a projective bundle of rank $r_i$ over $Y_i$. Then, there is a finite extension $K\subset L$ such that $f^{an}_L:(X_L)^{an} \rightarrow (Y_L)^{an}$ is a homotopy equivalence.
	
Moreover, if we suppose the field $K$ has a countable dense subset, then we can take $L = K$.\end{maintheorem}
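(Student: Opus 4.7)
The plan is to prove the theorem by induction on the number $n$ of strata, reducing to the base case $n = 1$ where $f : X \to Y$ is itself a projective bundle of rank $r$; this base case is the central generalization of the Brown--Foster result. For the base case, the key geometric input is that the Berkovich analytification of $\mathbb{P}^r$ admits a canonical strong deformation retraction onto its Gauss point. The strategy is to globalize this fiberwise retraction into a continuous strong deformation retract of $X^{an}$ onto the image of a section of $f^{an}$. The finite extension $L/K$, or the countable-dense-subset hypothesis on $K$, enters precisely to secure a Zariski-open trivializing cover of $Y$ over which a compatible choice of sections and fiberwise coordinates can be made; without this, only the local contractibility of fibers is visible, and patching may fail.

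For the inductive step, let $Y_0 \subset Y$ be a closed stratum of smallest dimension and $U = Y \setminus Y_0$ its open complement. Writing $X_0 = f^{-1}(Y_0)$ and $X_U = f^{-1}(U)$, the induction hypothesis (after a common finite extension, or over $K$ itself under the density hypothesis) yields homotopy equivalences $f^{an}_0 : X_0^{an} \to Y_0^{an}$ and $f^{an}_U : X_U^{an} \to U^{an}$. To assemble these into a homotopy equivalence $f^{an} : X^{an} \to Y^{an}$, I would use that $Y_0^{an}$ admits a neighborhood in $Y^{an}$ that strongly deformation retracts onto it---a tubular-neighborhood statement for Berkovich analytifications of closed subvarieties---pull this deformation retract back along $f^{an}$, and glue to the open-part equivalence via a Mayer--Vietoris / homotopy-pushout argument. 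The delicate point is compatibility on the overlap; it is likely cleanest to construct both retractions simultaneously from a single skeletal or model-theoretic structure on $Y$, rather than gluing two independently chosen ones.

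The main obstacle is the base case: producing a genuinely continuous global deformation retract from the pointwise Gauss-point retractions on fibers. An appealing alternative is to bypass the explicit retract by establishing a non-Archimedean analogue of the Vietoris--Begle principle---that a proper surjective morphism between Berkovich analytifications of finite-type $K$-varieties with contractible fibers is a (weak) homotopy equivalence---and then verifying its hypotheses, which by proper base change reduces to checking contractibility of $(\mathbb{P}^r_{\mathcal{H}(y)})^{an}$ at each point $y \in Y^{an}$. Either approach must reckon with the role of the extension $L/K$: although non-Archimedean projective spaces are contractible over any complete non-trivially valued field, exhibiting this contractibility globally on a possibly singular base requires sufficiently many trivializations, which the finite extension (or countable density) provides.
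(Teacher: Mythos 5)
Your ``appealing alternative'' is in fact the paper's actual argument, and you should have committed to it as the main line rather than offering it as a fallback. The paper applies the classical Vietoris--Begle--Smale mapping theorem directly to $f^{an}\colon X^{an}\to Y^{an}$: since $f$ is proper and surjective, so is $f^{an}$, and the analytic fiber over any $p\in Y_i^{an}\subset Y^{an}$ is $(\mathbb{P}^{r_i}_{k(p)})^{an}$, which is contractible. Note this handles the stratified case in one stroke --- the stratification only serves to identify the fiber over each point, and there is no induction on strata. Your primary plan (globalize the fiberwise Gauss-point retraction, then Mayer--Vietoris on strata via a tubular-neighborhood deformation retract of a neighborhood of $Y_0^{an}$) is not what the paper does, and the difficulties you flag there are real and unresolved: a continuous global retraction onto a section and a tubular-neighborhood theorem for Berkovich analytifications of arbitrarily singular closed subvarieties in arbitrary characteristic are neither constructed nor cited, and once you take the Vietoris--Begle route the entire induction is superfluous.

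You also misidentify where the countable-dense-subset hypothesis and the finite extension $L/K$ enter, and this is a substantive gap in your account. They have nothing to do with securing trivializing covers, sections, or compatible fiberwise coordinates; contractibility of each analytic fiber is automatic over any complete non-trivially valued field. Rather, Smale's theorem requires source and target to be separable, locally compact, locally contractible metric spaces. Local compactness, local contractibility, and the CW homotopy type of $X^{an}$ and $Y^{an}$ come from Hrushovski--Loeser for any $K$, but metrizability together with separability is precisely the Hrushovski--Loeser--Poonen embedding theorem, which holds if and only if $K$ has a countable dense subset. When $K$ does not, the paper spreads $f$ out to a morphism $f'$ defined over a complete non-Archimedean subfield $K'\subset K$ that does have a countable dense subset (still satisfying property $(\dagger)$), applies the first case over a finite extension $L'$ of $K'$, and then invokes Hrushovski--Loeser Theorem 14.2.3, which guarantees that $L'$ can be chosen so that for every further extension $F/L'$ the maps $(X'_F)^{an}\to(X'_{L'})^{an}$ and $(Y'_F)^{an}\to(Y'_{L'})^{an}$ are homotopy equivalences. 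Taking $L/K$ finite and containing $L'$ transfers the homotopy equivalence up to $L$. The finite extension is thus forced by topological tameness of the skeleton under base change, not by any geometry of the bundle.
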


\noindent{}Brown and Foster have shown that over $K=\mathbb{C}(\!(t)\!)$, if $f:X \rightarrow Y$ is a projective bundle with $Y$ smooth, then $f^{an}: X^{an} \rightarrow Y^{an}$ is a homotopy equivalence \cite[Corollary 1.1.3]{BF14}. Their argument follows the minimal model approach developed in \cite{dFKX12,MN15,NX13}. The assumptions of equi-characteristic $0$ and of $Y$ being smooth (or mildly singular) are essential for this method.

We follow a different approach, which works over $K$ of arbitrary characteristic and allows arbitrarily bad singularities for $Y$. Recall from \cite{HLP14} that over $K$ with a countable dense subset, the analytification $X^{an}$ of a quasi-projective $K$-scheme of dimension $d$ embeds into $\mathbb{R}^{2d+1}$; in particular $X^{an}$ is metrizable and has a countable dense subset. Therefore, we can apply the Vietoris-Begle-Smale mapping theorem \cite[Main Theorem]{Sma57} to show that over such $K$, any proper surjection $f^{an}:X^{an} \rightarrow Y^{an}$ is a homotopy equivalence if the fibers of $f^{an}$ are contractible. We conclude the proof of Theorem \ref{projective fibers} with a spreading out argument (see \S \ref{secanal}).



\begin{remark} One could ask if the conclusion of Theorem \ref{projective fibers} holds without having to pass to a finite extension $L$ of $K$. We are not aware of examples of morphisms $f$ satisfying the hypothesis of the theorem for which $f^{an}$ fails to be a homotopy equivalence. The finite extension is used only to apply \cite[Theorem 14.2.3]{HL16}.
\end{remark}

\noindent{}This paper is structured as follows: In \S\ref{sectiontwo}-\ref{sec trop sym} we review the tropical Abel-Jacobi theory of metric graphs and establish a tropical Lefschetz hyperplane theorem for Jacobians of metric graphs. In \S \ref{secanal} we survey the construction of Berkovich analytifications and prove Theorem \ref{projective fibers}. In \S \ref{sec skel}-\ref{sec sym} we show that for analytifications of curves, the symmetric product of the skeleton is the skeleton of the symmetric product. Finally, in \S \ref{sectionabeljac}-\ref{finalsec} we establish Theorem \ref{nonarch wd} and Theorem \ref{tropicalize wd} by combining all of the previous results.

\section{Metric Graphs and their Jacobians}\label{sectiontwo}
A \textbf{metric graph} $\Gamma$ is the geometric realization of a graph $G=(V,E)$ equipped with an edge-length function $\ell:E\rightarrow \mathbb{R}_{>0}$. Each edge $e$ is identified with a line segment in $\Gamma$ of length $\ell(e)$. Recall from \cite{BF06,MZ08} that a harmonic $1$-form on $\Gamma$ is given by assigning a real-valued slope to each edge in $\Gamma$ such that the sum of the incoming slopes is zero at every vertex. Let  $\Omega(\Gamma)$ denote the space of harmonic $1$-forms on $\Gamma$, and let  $\Omega^*(\Gamma)$ be its dual. The \textbf{Jacobian} of $\Gamma$ is defined as the quotient $$J(\Gamma) := \Omega^*(\Gamma)/H_1(\Gamma,\mathbb{Z})$$
by realizing $H_1(\Gamma,\mathbb{Z})$ as a lattice in $\Omega^*(\Gamma)$ via integration over $1$-cycles. We refer the reader to \cite{BF06,MZ08} for the details of the above constructions. There is a canonical identification between $\Omega^*(\Gamma)$ and $H_1(\Gamma,\mathbb{R})$ (see \cite[Lemma 2.1]{BF06}). In particular, the Jacobian can be equivalently defined as the torus $$J(\Gamma):= H_1(\Gamma,\mathbb{R})/H_1(\Gamma,\mathbb{Z}).$$

Fix a full-rank lattice $\Lambda$ in a real vector space $V$. Given a positive definite quadratic form $Q$ on $V$, the Voronoi polytope $\Vor(Q)$ associated to $Q$ is the set of points \[\Vor(Q):=\{x\in V:
Q(x) \leq Q(x-\lambda)\ \forall\lambda\in
\Lambda\}.\] 
Set $\Lambda = H_1(\Gamma,\mathbb{Z})$, and consider the positive definite quadratic form $Q(\Gamma)$ on $H_1(\Gamma,\mathbb{R})$ given by
$$Q(\Gamma)\bigg(\sum_{e\in E}a_e e\bigg):= \sum_{e\in E}a_e^2 \ell(e)$$
where $\sum_{e\in E}a_e e$ is a 1-chain. The \textbf{theta divisor} $\Theta(\Gamma)$ of $J(\Gamma)$ is the image in $J(\Gamma)$ of the codimension 1 skeleton of the associated Voronoi polytope $\Vor(Q(\Gamma))$.

\subsection{Divisors of a metric graph} We now review the theory of divisors on a metric graph. For further details and references, see \cite{BF11,MZ08}. A \textbf{divisor} is a finitely supported element $D= \sum_{x\in \Gamma}D_x  x$ of the free abelian group on $\Gamma$. Let $f: \Gamma \rightarrow \mathbb{R}$ be a piecewise linear function with integral slopes, let $\ord_x(f)$ denote the sum of outgoing slopes of $f$ at $x$, and let $\divi(f)$ be the divisor defined by the sum
\[\divi(f):=\sum_{x\in \Gamma}\ord_x(f)x.\]
Two divisors $D$ and $D'$ are said to be \textbf{equivalent} if $D-D' =\divi(f)$ for some $f$. 

Mikhalkin and Zharkov constructed, for each choice of basepoint $p\in \Gamma$, a
\textbf{tropical Abel-Jacobi map} $\alpha_p: \Gamma \rightarrow J(\Gamma)$. The map $\alpha_p$ is defined by sending a point $q\in \Gamma$ to the integral $\int^q_p \in \Omega^*(\Gamma)$. Let $\deg(D):= \sum_{x\in \Gamma} D_x$ be the degree of a divisor $D$; let $\Div^d(\Gamma)$ be the set of divisors of degree $d$. Extending $\alpha_p$ linearly gives maps $\alpha_{p,d}:\Div^d(\Gamma)\rightarrow J(\Gamma)$. Note that the tropical Abel-Jacobi map takes equivalent divisors to the same point in $J(\Gamma)$. Let $\Pic^d(\Gamma)$ be the equivalent classes of divisors of degree $d$. Then in particular, we get an induced map $\alpha_{p,d}: \Pic^d(\Gamma) \rightarrow J(\Gamma)$.

\begin{theorem}[{\cite[Theorem 6.2]{MZ08}}]The map $\alpha_{p,d}: \Pic^d(\Gamma) \rightarrow J(\Gamma)$ is a bijection, and when $d=0$ it does not depend on the choice of basepoint $p$.
\end{theorem}

\subsection{Effective divisors} A divisor is \textbf{effective} if all of its coefficients are non-negative; a divisor class is effective if it contains an effective representative. Let $W_d(\Gamma) \subset J(\Gamma)$ denote the image under $\alpha_{p,d}$ of the locus of effective divisor classes in $\Pic^d(\Gamma)$. For $d' \leq d$, one can easily see that $W_{d'}(\Gamma) \subset W_d(\Gamma)$. 

For $1\leq d$, let $\Theta_d(\Gamma)$ denote the image in $J(\Gamma)$ on the $d$-skeleton of $\Vor(Q(\Gamma))$. Let $b$ denote the \textbf{genus} of $\Gamma$, i.e. its first Betti number. By \cite[Corollary 8.6]{MZ08}, $W_{b-1}(\Gamma)$ is a translate of $\Theta_{b-1}(\Gamma)=\Theta(\Gamma)$.

\begin{example}\label{wd cube} Let $\Gamma$ be a bouquet of $n$ circles of arbitrary lengths. Let $e_1,...,e_n$ denote the cycles in $H_1(\Gamma,\mathbb{Z})$ defined by the edges of $\Gamma$. Then $\Vor(Q(\Gamma))$, up to translation by $\kappa = \frac{1}{2}\sum_{1\leq i \leq n} e_i$, is the cube with vertex set $\{ \sum_{i\in I}e_i\} $ as $I$ ranges over the subsets of $\{1,...,n\}$. From this, one can easily show that $W_1(\Gamma) = \Theta_1(\Gamma) + \kappa$. Since $W_d(\Gamma) = W_1(\Gamma)+\cdots+W_1(\Gamma)$, it follows that $W_d(\Gamma) = \Theta_d(\Gamma) + \kappa$ for all $d$, and therefore $J(\Gamma)$ can be obtained from $W_d(\Gamma)$ by attaching cells of dimensions $>d$. \end{example}

\noindent{}In general, $W_d(\Gamma)$ is not necessarily a translate of $\Theta_d(\Gamma)$. In fact, they are not necessarily homotopic to each other, as the next example shows.
\begin{example}\label{wddifficult} Consider the metric graph $\Gamma$ on the left on Figure ~\ref{skelgapexample}. The front and the back of its Voronoi polytope $\Vor(Q(\Gamma))$ are displayed on the center and right respectively; on the images, each distinct vertex of $\Vor(Q(\Gamma))$ is labelled with a unique letter. The quotient map $\Vor(Q(\Gamma)) \rightarrow J(\Gamma)$ identifies the opposite faces of $\Vor(Q(\Gamma))$, and sends the $d$-skeleton of $\Vor(Q(\Gamma))$ onto $\Theta_d(\Gamma)$. In the pictures below, we colored the faces that get identified with the same color. From this, one can deduce that $\Theta_1(\Gamma)$ is a graph with 4 vertices and 9 edges, and therefore has genus 6. However, $W_1(\Gamma)$ is homotopic to $\Gamma$, which has genus 3, and therefore cannot be identified with $\Theta_1(\Gamma)$.

\begin{figure}[h]
			\includegraphics[scale=.4]{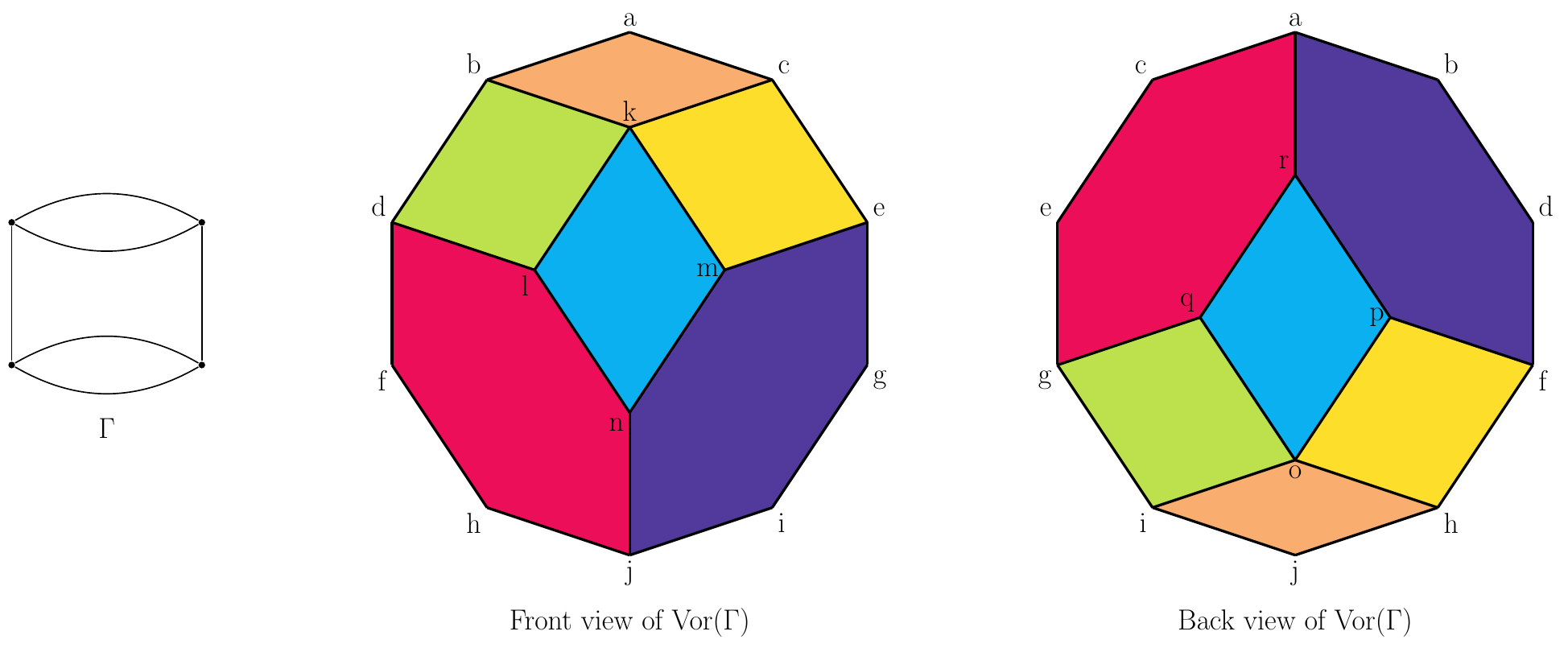}
			\caption{}
			\label{skelgapexample}
\end{figure}

\end{example}


\section{Symmetric Products and Lefschetz for $W_d(\Gamma)$} \label{sec trop sym}

Given a space $A$ and a subspace $B\subset A$, recall that the pair $(A,B)$ is \textbf{$n$-connected} if $\pi_i(A,B)=0$ for $i \leq n$. By construction, $J(\Gamma)$ can be obtained from $\Theta_d(\Gamma)$ by attaching cells of dimensions $>d$. In particular, the pair $(J(\Gamma),\Theta_d(\Gamma))$ is $d$-connected. In this section, we show that the same is true for the pair $(J(\Gamma),W_d(\Gamma))$, which gives us a tropical analog of the Lefchetz hyperplane theorem for $W_d(\Gamma)$.
\begin{theorem}[Lefschetz for $W_d(\Gamma)$]{\label{tropical wd}} For $d \geq 1$, the pair $(J(\Gamma),W_d(\Gamma))$ is $d$-connected.
\end{theorem}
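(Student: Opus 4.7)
The plan is to factor the inclusion $W_d(\Gamma) \hookrightarrow J(\Gamma)$ through the symmetric product $\Sym^d(\Gamma)$ and analyze the two resulting maps separately.

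First, I would study the natural continuous surjection $\sigma_d : \Sym^d(\Gamma) \twoheadrightarrow W_d(\Gamma)$ induced by summing along the tropical Abel--Jacobi map: its fiber over a class $[D]$ is the complete linear system $|D|$ of effective divisors equivalent to $D$. A standard fact in tropical linear series theory shows $|D|$ is a contractible polyhedral complex (it carries a tropical-convex, in particular $\min$-semilattice, structure). Since $\sigma_d$ is a proper surjection of compact metrizable polyhedra with contractible fibers, Smale's strengthening of the Vietoris--Begle theorem shows $\sigma_d$ is a homotopy equivalence. It therefore suffices to prove that the composite $\alpha_d : \Sym^d(\Gamma) \to J(\Gamma)$ is $d$-connected.

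Since $\pi_i(J(\Gamma)) = 0$ for $i \geq 2$ and $\pi_1(J(\Gamma)) = H_1(\Gamma,\mathbb{Z})$, the $d$-connectivity of $\alpha_d$ reduces to two sub-claims: (a) $\pi_1(\alpha_d)$ is surjective for $d=1$ and an isomorphism for $d \geq 2$; and (b) $\pi_i(\Sym^d(\Gamma)) = 0$ for $2 \leq i \leq d-1$. Claim (a) follows because $\Gamma = \Sym^1(\Gamma) \to J(\Gamma)$ is on $\pi_1$ the abelianization $F_g \twoheadrightarrow \mathbb{Z}^g$, and because in $\Sym^2(\Gamma) \subseteq \Sym^d(\Gamma)$ any two loops can be swapped by sliding two marked points past each other, forcing $\pi_1(\Sym^d(\Gamma))$ to be abelian for $d \geq 2$. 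For claim (b), I would invoke the Dold--Thom theorem to identify $\Sym^\infty(\Gamma) := \varinjlim_n \Sym^n(\Gamma)$ with $K(H_1(\Gamma,\mathbb{Z}),1) \simeq J(\Gamma)$ (using that $\widetilde{H}_i(\Gamma)$ vanishes for $i \neq 1$), under which $\alpha_d$ factors as $\Sym^d(\Gamma) \hookrightarrow \Sym^\infty(\Gamma) \simeq J(\Gamma)$. It then suffices to show the stabilization $\Sym^d(\Gamma) \hookrightarrow \Sym^\infty(\Gamma)$ is $d$-connected, which I plan to verify via the natural polyhedral cell structure that $\Gamma$ induces on each $\Sym^n(\Gamma)$.

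\textbf{The main obstacle} is establishing the intermediate vanishing (b). A naive cellular bound on the inclusions $\Sym^n(\Gamma) \hookrightarrow \Sym^{n+1}(\Gamma)$ is inadequate, because the relative cells can have arbitrarily small dimension (when several marked points coincide at a vertex of $\Gamma$). I expect this will require exploiting the special combinatorial structure of $\Sym^n$ of a $1$-complex: for instance, by decomposing $\Sym^n(\Gamma)$ relative to the contractible $\Sym^n(T)$ for a spanning tree $T \subset \Gamma$ and carefully tracking the contribution of the $g$ remaining cycles, in the spirit of Tuffley's computations for finite subset spaces of graphs.
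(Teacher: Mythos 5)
Your overall strategy matches the paper's: factor the inclusion through $\Sym^d(\Gamma)$, use the contractibility of tropical complete linear systems together with Vietoris--Begle--Smale to identify $\Sym^d(\Gamma)\to W_d(\Gamma)$ as a homotopy equivalence, and then show $\Sym^d(\Gamma)\to J(\Gamma)$ is $d$-connected by examining homotopy groups. The first half is essentially the paper's Proposition~\ref{trophomotopy} (the paper cites \cite[Corollary 30]{HMY12} for contractibility of $|D|$ and \cite{LPP12} for the CW structure on $W_d(\Gamma)$, where you appeal to tropical convexity and ``polyhedra'' informally, but the content is the same). A minor slip: the rank of $\pi_1(\Gamma)$ is the first Betti number $b$ of $\Gamma$, not the genus $g$ of the curve $X$; these need not agree.

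The genuine gap is exactly where you flag it: your claim (b), the vanishing of $\pi_i(\Sym^d(\Gamma))$ for $2\leq i\leq d-1$, and more generally the $d$-connectivity of $\Sym^d(\Gamma)\hookrightarrow\Sym^\infty(\Gamma)$. The paper does not attempt to establish this by a cell-by-cell analysis of the stabilization maps. Instead it invokes a theorem of Ong \cite[Theorem 1.2]{Ong03}, which gives a homotopy equivalence $\Sym^d\bigl(\bigvee^b S^1\bigr)\simeq (S^1)^b_d$, where $(S^1)^b_d$ is the $d$-skeleton of the torus in its standard cubical CW structure. Since $\Gamma\simeq\bigvee^b S^1$ and $\Sym^d$ preserves homotopy equivalence, this immediately gives $\pi_1(\Sym^d(\Gamma))\cong\mathbb{Z}^b$ (and hence the Hurewicz isomorphism you need for claim (a) when $d\geq 2$) and $\pi_i(\Sym^d(\Gamma))=0$ for $1<i<d$, because $(S^1)^b$ is obtained from $(S^1)^b_d$ by attaching cells of dimension $>d$ and $(S^1)^b$ is aspherical. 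Your Dold--Thom framing is consistent with this, but without Ong's result the $d$-connectivity of the inclusion into $\Sym^\infty$ is not something you can read off from the obvious polyhedral decomposition, as you yourself observe. Ong's proof is a generalization of the identification $\Sym^d(\mathbb{C}^*)\cong\mathbb{C}^*\times\mathbb{C}^{d-1}$ via symmetric functions, replacing $\mathbb{C}^*$ by $\mathbb{C}$ minus $b$ points; that is a cleaner route than the spanning-tree/Tuffley-style decomposition you sketch, and it is the ingredient that closes your gap.
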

\noindent{}As noted in the introduction, the key technical step of the proof of Theorem \ref{tropical wd} consists in showing that the natural map $\Sym^d(\Gamma) \rightarrow W_d(\Gamma)$ is a homotopy equivalence. First, let us recall some basic facts about symmetric products of topological spaces.



\subsection{Symmetric products} Given a topological space $\Delta$, the $d$-th symmetric group $S_d$ acts on the product $\Delta^d$ by permuting the factors. The \textbf{$d$-th symmetric product} $\Sym^d(\Delta)$ of $\Delta$ is the quotient space $\Delta^d/S_d$. The points of $\Sym^d(\Delta)$ can be written as formal unordered sums $p_1+\cdots+p_d$ of points $p_1,...,p_d \in \Delta$. As noted in \cite[\S 4K]{Hat02}, taking symmetric products preserves homotopy equivalence in following sense. A continuous map $f:\Delta \rightarrow \Delta'$ induces a continuous map $\Sym^d(f):\Sym^d(\Delta) \rightarrow \Sym^d(\Delta')$ defined by $p_1+\cdots+p_d \mapsto f(p_1)+\cdots+f(p_d)$. If $f$ is a homotopy equivalence, then so is $\Sym^d(f)$.

\begin{example}\label{exsim} By the fundamental theorem of symmetric functions, there is a canonical homeomorphism $\Sym^d(\mathbb{C}) \bij \mathbb{C}^d$, which sends $\Sym^d(\mathbb{C}^*) \subset \Sym^d(\mathbb{C})$ onto the subspace $\mathbb{C}^*\times \mathbb{C}^{d-1}$ (see \cite[Example 4K.4]{Hat02}). Since $S^1$ is homotopic to $\mathbb{C^*}$, we have that $\Sym^d(S^1)$ is homotopic to $\mathbb{C}^*\times \mathbb{C}^{d-1}$, and the latter is homotopic to $S^1$.
\end{example}

\noindent{}Set $\Delta = \Gamma$. Since $\Gamma$ is homotopic to a wedge sum $\bigvee^{b} S^1$, we have a homotopy equivalence from $\Sym^d(\Gamma)$ to $\Sym^d(\bigvee^{b} S^1)$. Consider the CW-structure on the torus $(S^1)^{b}$ given by identifying the opposite faces of the cube $[0,1]^{b}$, and let $(S^1)^{b}_d$ denote its $d$-skeleton.

\begin{theorem}[{\cite[Theorem 1.2]{Ong03}}]\label{Ong0312} For $d\leq {b}$, we have a homotopy equivalence
	\[\Sym^d\bigg(\bigvee^{b} S^1\bigg) \sim (S^1)^{b}_d.\]
\end{theorem}

\begin{remark}The proof is a generalization of Example \ref{exsim}, where $\mathbb{C}^*$ is replaced by $\mathbb{C}$ minus ${b}$ points in general position.
\end{remark}


\subsection{Lefschetz for $W_d(\Gamma)$} There is a natural inclusion $\Sym^d(\Gamma) \hookrightarrow \Div^d(\Gamma)$, realizing a point $p_1+\cdots+p_d$ of $\Sym^d(\Gamma)$ as an effective degree $d$ divisor on $\Gamma$. Thus, fixing a basepoint $p \in \Gamma$, we have a map $\alpha^{(d)}_p:\Sym^d(\Gamma) \rightarrow W_d(\Gamma)$ induced by the map $\alpha_{p,d}:\Div^d(\Gamma) \rightarrow W_d(\Gamma)$. The following is an essential ingredient in our proof of Theorem \ref{tropical wd}.

\begin{proposition}\label{trophomotopy} The map $\alpha^{(d)}_p:\Sym^d(\Gamma) \rightarrow W_d(\Gamma)$ is a homotopy equivalence.
\end{proposition}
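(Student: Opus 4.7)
The plan is to identify the fiber of $\alpha^{(d)}_p$ over a class $[D] \in W_d(\Gamma)$ with the complete linear system $|D| = \{E \in \Sym^d(\Gamma) : E \sim D\}$, show that each such fiber is contractible, and then conclude that $\alpha^{(d)}_p$ is a homotopy equivalence via a Vietoris--Begle type argument. Both $\Sym^d(\Gamma)$ and $W_d(\Gamma)$ are finite polyhedral complexes (hence compact metric ANRs), so $\alpha^{(d)}_p$ is a proper surjection of compact ANRs, and Smale's theorem will yield that it is a weak, and hence a genuine, homotopy equivalence once the fibers are known to be contractible.

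The main content is therefore the contractibility of $|D|$. I would parametrize $|D|$ by the space
\[
R(D) = \{f : \Gamma \to \mathbb{R} \text{ piecewise linear with integer slopes and } D + \divi(f) \geq 0\},
\]
modulo the $\mathbb{R}$-action by addition of constants. The key observation is that $R(D)$ is tropically convex, i.e.\ closed under pointwise maximum: at any $x \in \Gamma$ the outgoing slope of $\max(f_1,f_2)$ in a tangent direction $v$ is $\max(\partial_v f_1,\partial_v f_2) \geq \partial_v f_1$, so summing over $v$ yields $\ord_x(\max(f_1,f_2)) \geq \ord_x(f_1) \geq -D_x$. Fix any $f_0 \in R(D)$. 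For $f \in R(D)$ and $t \in \mathbb{R}$, the function $g_{f,t} = \max(f, f_0 + t)$ lies in $R(D)$, coincides with $f$ for $t$ sufficiently negative, and coincides with $f_0 + t$ for $t$ sufficiently positive (since $\Gamma$ is compact and the $|D|$-representatives, after normalization, have uniformly bounded oscillation). After a suitable reparametrization of $t$ on a compact interval and a continuous choice of normalization $|D| \to R(D)$ (e.g.\ requiring $\min f = 0$), the assignment $([f], s) \mapsto [g_{f, t(s)}]$ descends to a continuous deformation retraction of $|D|$ onto $[f_0]$.

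The main obstacle will be the bookkeeping needed to verify the hypotheses of the Vietoris--Begle--Smale theorem --- in particular, identifying $W_d(\Gamma)$ as a finite polyhedral subset of $J(\Gamma)$ so that $\alpha^{(d)}_p$ is genuinely a proper map of compact ANRs with the fibers one expects --- rather than the tropical convexity contraction itself, which is essentially formal once the slope computation is verified.
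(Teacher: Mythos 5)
Your plan matches the paper's proof at every structural level: identify the fibers of $\alpha^{(d)}_p$ with the complete linear series $|D|$, establish that each $|D|$ is contractible, and then conclude via the Vietoris--Begle--Smale mapping theorem together with Whitehead's theorem, using that both $\Sym^d(\Gamma)$ and $W_d(\Gamma)$ are finite polyhedral complexes (the latter is exactly what the paper cites \cite{LPP12} for, and you correctly flag it as the necessary bookkeeping). Where you go further is the contractibility of $|D|$: the paper simply cites \cite[Corollary 30]{HMY12}, whereas you sketch the tropical-convexity contraction directly; that sketch is in substance the argument of that reference, so this is an expansion of a cited step rather than a genuinely different route. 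Two small corrections to the sketch. The intermediate inequality $\ord_x(\max(f_1,f_2)) \geq \ord_x(f_1)$ fails at points where $f_1(x) < f_2(x)$ --- there $\ord_x(\max(f_1,f_2)) = \ord_x(f_2)$, which need not dominate $\ord_x(f_1)$ --- but the target inequality $\ord_x(\max(f_1,f_2)) \geq -D_x$ still holds after a three-way case split on the sign of $f_1(x)-f_2(x)$, so $R(D)$ is indeed closed under $\max$. Also, the ``uniformly bounded oscillation'' claim used to confine the homotopy parameter $t$ to a compact interval is a genuine compactness input (equivalent to compactness of $|D|$) and should be argued, e.g.\ by bounding the integer slopes of any $f \in R(D)$ in terms of $\deg D$, rather than asserted.
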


\noindent{}Proposition \ref{trophomotopy} is a consequence of the contractibility of the fibers of $\alpha^{(d)}_p$, as we now explain. First, recall that given a divisor $D$, the \textbf{complete linear series} $|D|$ is the set of effective divisors equivalent to $D$. Each point $x$ of $W_d(\Gamma)$ corresponds to a class $[D] \in \Pic^d(\Gamma)$ containing an effective divisor $D$. This inclusion $\Sym^d(\Gamma) \hookrightarrow \Div^d(\Gamma)$ identifies $\Sym^d(\Gamma)$ with the subset of $\Div^d(\Gamma)$ of effective divisors, and therefore the preimage of $x$ in $\Sym^d(\Gamma)$ is equal to $|D|$. Consider $|D|$ as a topological space with the subspace topology from $\Sym^d(\Gamma)$. Then, by \cite[Corollary 31]{HMY12}, the complete linear series $|D|$ is contractible.


We also recall that a continuous function $\phi:M \rightarrow N$ is a \textbf{weak homotopy equivalence} if the induced map $\phi_{*,n}:\pi_n(M) \rightarrow \pi_n(N)$ is an isomorphism for all $n$. The following is a theorem of Smale, also known as the Vietoris-Begle-Smale mapping theorem.

\begin{theorem}[{\cite[Main Theorem]{Sma57}}]\label{Sma58MainThm}Let $\phi:M \rightarrow N$ be a proper surjection between connected locally compact metric spaces with a countable dense subset. Suppose $M$ is locally contractible, and suppose $\phi^{-1}(p)$ is contractible for all $p\in N$. Then $\phi$ is a weak homotopy equivalence.
\end{theorem}


\noindent{}By Whitehead's theorem, a weak homotopy equivalence $\phi:M \rightarrow N$ is a homotopy equivalence if both $M$ and $N$ have the homotopy type of CW-complexes. Given a CW-decomposition on $\Gamma$, there is a natural way of putting a CW-structure on $\Sym^d(\Gamma)$ (see \cite[\S 4K]{Hat02}). From \cite{LPP12}, we know that we can write $W_d(\Gamma)$ as the image of a finite union of polytopes in $H_1(\Gamma,\mathbb{R})$. In particular, this implies that $W_d(\Gamma)$ also admits a CW-decomposition.

\begin{proof}[Proof of Proposition \ref{trophomotopy}] By Whitehead's theorem and the Vietoris-Begle-Smale mapping theorem, it suffices to show that the map $\Sym^d(\Gamma) \rightarrow W_d(\Gamma)$ has contractible fibers. As observed above, the fibers of the map are contractible by \cite[Corollary 31]{HMY12}, and the theorem follows.
\end{proof}

\noindent{}We now prove Theorem \ref{tropical wd}.

\begin{proof}[Proof of Theorem \ref{tropical wd}] By \cite[Theorem 6.5]{MZ08}, every divisor of degree greater or equal to $b$ is equivalent to an effective divisor. In particular, for $d \geq b$, $W_d(\Gamma) = J(\Gamma)$. Therefore we only need to consider the case of $d< {b}$. By Proposition \ref{trophomotopy}, we can equivalently show that for $d< b$, the map $\Sym^d(\Gamma) \rightarrow J(\Gamma)$ induces isomorphisms between homotopy groups of dimensions $<d$ and a surjection in dimension $d$. 
	
Choose a basepoint $p \in \Gamma$. There is a natural map $\Gamma \rightarrow \Sym^d(\Gamma)$ given by
\[x\mapsto x+p+\cdots+p\]
such that the following diagram commutes.
\begin{center}
\begin{figure}[H]
			\includegraphics[scale=1]{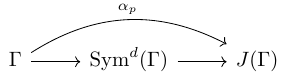}
\end{figure}
\end{center}
\vspace{-25pt}

By \cite[Lemma 3.4]{BR13}, the map $\alpha_{p,*}: H_1(\Gamma,\mathbb{Z}) \rightarrow H_1(J(\Gamma),\mathbb{Z})$ is an isomorphism. We now show that the map $H_1(\Gamma,\mathbb{Z}) \rightarrow H_1(\Sym^d(\Gamma),\mathbb{Z})$ is also an isormorphism. For $n>1$, we have an inclusion $\Sym^{n-1}(\Gamma) \hookrightarrow \Sym^{n}(\Gamma)$ given by 
\[x_1+\cdots +x_{n-1} \mapsto x_1+\cdots +x_{n-1}+p.\]
Since $\Gamma$ is connected, by \cite[Theorem 1.3]{Kal08}, the above inclusion induces isomorphisms between homology groups of dimensions $\leq 2n-3$. Therefore, the inclusions
\[\Gamma=\Sym^{1}(\Gamma) \hookrightarrow \cdots \hookrightarrow \Sym^{d}(\Gamma)\]
induce an isormophism $H_1(\Gamma,\mathbb{Z}) \cong H_1(\Sym^d(\Gamma),\mathbb{Z})$, as desired.
 
It thus follows that the map $H_1(\Sym^d(\Gamma),\mathbb{Z}) \rightarrow H_1(J(\Gamma),\mathbb{Z})$ is an isomorphism. Now, the Hurewicz maps $\pi_1(\Sym^d(\Gamma))\rightarrow H_1(\Sym^d(\Gamma,\mathbb{Z})$ and $\pi_1(J(\Gamma))\rightarrow H_1(J(\Gamma),\mathbb{Z})$ are isomorphisms. Hence, the map $\pi_1(\Sym^d(\Gamma)) \rightarrow \pi_1(J(\Gamma))$ is an isomorphism.

As noted above, $\Sym^d(\Gamma)$ is homotopy equivalent to $\Sym^d(\bigvee^{b} S^1)$. Thus, by Theorem \ref{Ong0312}, $\Sym^d(\Gamma)$ is homotopy equivalent to $(S^1)^{b}$. In particular, for $1< i<d$, we have that $\pi_i(\Sym^d(\Gamma)) \cong\pi_i((S^1)^{b}_d) =0 $. For $i>1$, $\pi_i(J(\Gamma))=0$. Hence, $\pi_i (\Sym^d(\Gamma)) \rightarrow \pi_i(J(\Gamma))$ is an isomorphism in dimensions $<d$, and a surjection in dimension $d$. 
\end{proof}

\begin{remark} In \cite{AB14}, Adiprasito and Bj\"orner proved, using a Morse-theoretic argument, that locally matroidal tropical varieties in $\mathbb{TP}^n$ satisfy a similar Lefschetz hyperplane theorem. The methods and results of \cite{AB14} do not apply to our setting, since  $W_d(\Gamma)$ is not necessarily locally matroidal. For example, if $\Gamma$ is the bouquet of circles from Example \ref{wd cube}, then $W_d(\Gamma)$ is locally isomorphic to the dual fan of a cube, which is not a matroidal fan. 
\end{remark}


\section{Analytification of Morphisms with Projective Fibers} \label{secanal}


\subsection{Berkovich analytification} Let $K$ be a field complete with respect to a non-trivial non-Archimedean norm $|\cdot|_K$. Berkovich analytification associates to each scheme $X$ that is locally of finite type over $K$ an analytic space $X^{an}$. If $X = \Spec A$, then the points of $X^{an}$ are multiplicative seminorms $|\cdot|:A\rightarrow \mathbb{R}_{\geq 0}$ extending the given norm on $K$. The topology on $X^{an}$ is the coarsest topology such that, for each $a \in A$, the function on $X^{an}$ given by $|\cdot|\mapsto |a|$ is continuous. In general, given an affine cover $X = \cup_i U_i$, there is a natural way to construct $X^{an}$ by gluing together the $U^{an}_i$: see \cite{Ber90} for more details.

An \textbf{extension} $L$ of $K$ is a field $K \subset L$ complete with respect to a norm $|\cdot|_L$ extending $|\cdot|_K$; $K$ is then a \textbf{non-Archimedean subfield} of $L$. Given an extension $L$ of $K$, there is a natural map $X(L) \rightarrow X^{an}$ defined as follow. Suppose we have a map $\Spec L \rightarrow X$. Let $U$ be an affine open in $X$ containing the image of $\Spec L$. Then, we obtain a norm $|\cdot|\in U^{an} \subset X^{an}$ by composing $|\cdot|_L$ with the map $\mathscr{O}_X(U) \rightarrow L$. Note that if $L'$ is an extension of $L$, then the following commutes, where the map $X(L) \rightarrow X(L')$ is given by viewing an $L$ point as an $L'$ point.

\begin{center}
\begin{figure}[H]
			\includegraphics[scale=1]{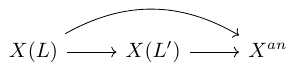}
\end{figure}
\end{center}
\vspace{-25pt}

Given a point $p\in X^{an}$, there is always an extension $L$ of $K$ such that $p$ is in the image of $X(L)$. Indeed, choose an affine open $U = \Spec A$ such that $p \in U^{an}$, then we can write $p$ as a seminorm $|\cdot|_p:A \rightarrow \mathbb{R}_{\geq 0}$. Let $\mathfrak{p}$ be the kernel of $|\cdot|_p$; then $\mathfrak{p}$ is prime. Let $k(p)$ denote the completion of the fraction field $k(\mathfrak{p})$ of $A_\mathfrak{p}$. The composition $\mathscr{O}_X(U) \rightarrow k(\mathfrak{p}) \rightarrow k(p)$ gives us a canonical preimage of $p$ in $X(k(p))$.

There is a nice correspondence between the scheme-theoretic properties of $X$ and the topological properties of $X^{an}$.
\begin{theorem}[{\cite[Theorem 3.4.8]{Ber90}}] The scheme $X$ is separated (resp. connected, resp. proper) if and only if $X^{an}$ is Hausdorff (resp. path-connected, resp. compact).
\end{theorem}

\noindent{}The sheaf of functions on $X$ determines a sheaf of analytic functions on $X^{an}$, and each morphism $f: X \rightarrow Y$ induces a morphism of ringed spaces $f^{an}: X^{an} \rightarrow Y^{an}$. Analytification is thus a functorial construction taking $X$ to the category $K\mbox{-}\mathbf{An}$ of $K$-analytic spaces. Since we are only interested in the topological properties of analytifications, we refer the reader to \cite{Ber90} for the details of these constructions. 

Suppose we have a morphism $f: X\rightarrow Y$. Then by \cite[Propostion 3.4.6]{Ber90}, the morphism $f$ is injective (resp. surjective) if and only if $f^{an}$ is injective (resp. surjective). Also, by \cite[Propostion 3.4.7]{Ber90}, if the morphism $f$ is of finite type, then $f$ is proper if and only if $f^{an}$ is proper. 

Let $p \in Y^{an}$. As discussed, there is an associated point $\Spec k(p) \rightarrow Y$ in $Y(k(p))$. Let $X_{k(p)}$ denote $X \times_Y \Spec k(p)$; the \textbf{analytic fiber} of $p$ is the analytification $X^{an}_{k(p)}$. Then, as noted in \cite[\S 1.4]{Ber93}, there is a natural homeomorphism
	\[X_{k(p)}^{an} \bij (f^{an})^{-1}(p).\]


\subsection{Morphisms with projective fibers} \label{subsectfiber} For the rest of this section, all schemes are assumed to be quasi-projective. Recall the embeddability result from Hrushovski, Loeser and Poonen.

\begin{theorem}[{\cite[Theorem 1.1]{HLP14}}]\label{HLP14T11} Let $X$ be a scheme over $K$ of dimension $d$. Then $X^{an}$ is homeomorphic to a subspace of $\mathbb{R}^{2d+1}$ if and only if $K$ has a countable dense subset. 
\end{theorem}

\noindent{}Also recall that using model theory techniques, Hrushovski and Loeser established in \cite{HL16} various topological tameness results for Berkovich analytifications (see \cite{Duc13} for an expository summary). In particular, they showed that Berkovich analytications are locally contractible and have the homotopy type of CW-complexes \cite[Theorem 14.4.1, Theorem 14.2.4]{HL16}. These results allow us to apply the Vietoris-Begle-Smale mapping theorem to derive the following.

\begin{lemma}\label{ber contra fiber} Suppose $K$ has a countable dense subset. Let $f: X\rightarrow Y$ be a surjective morphism of projective $K$-varieties such that for all $p\in Y^{an}$, the analytic fiber $X^{an}_{k(p)}$ is contractible. Then the map $f^{an}: X^{an}\rightarrow Y^{an}$ is a homotopy equivalence.
\end{lemma}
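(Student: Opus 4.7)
The plan is to verify the hypotheses of the Vietoris–Begle–Smale mapping theorem for $f^{an}$, conclude that $f^{an}$ is a weak homotopy equivalence, and then apply Whitehead's theorem to upgrade this to a genuine homotopy equivalence, exactly mirroring the proof of Proposition \ref{trophomotopy}. The argument is essentially an assembly of cited results; all the hard analysis has already been done in \cite{HLP14} and \cite{HL16}.

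First I would establish the topological setup. Since $K$ has a countable dense subset and $X,Y$ are quasi-projective, the embedding theorem of Hrushovski–Loeser–Poonen \cite[Theorem 1.1]{HLP14} realizes both $X^{an}$ and $Y^{an}$ as subspaces of Euclidean spaces; in particular both are metrizable and possess countable dense subsets. Local compactness of Berkovich analytifications of quasi-projective schemes is standard, and local contractibility of $X^{an}$ (in fact of both $X^{an}$ and $Y^{an}$) follows from \cite[Theorem 14.4.1]{HL16}. Smale's theorem requires a connected target, so I would decompose $Y^{an}$ into its finitely many connected components $Y^{an}_j$, set $X^{an}_j := (f^{an})^{-1}(Y^{an}_j)$, and apply the argument separately to each proper surjection $f^{an}|_{X^{an}_j} : X^{an}_j \to Y^{an}_j$; a disjoint union of homotopy equivalences is a homotopy equivalence.

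Next I would invoke the proposition from \cite[\S 3.4]{Ber90} recalled above, which guarantees that $f^{an}$ is a proper surjection (and hence so is each restriction $f^{an}|_{X^{an}_j}$), together with the natural homeomorphism $X_{k(p)}^{an} \bij (f^{an})^{-1}(p)$ from \cite[\S 1.4]{Ber93}. The hypothesis of the lemma then says exactly that every fiber of $f^{an}$ is contractible. All the hypotheses of the Vietoris–Begle–Smale mapping theorem (as stated in the excerpt) are met, so $f^{an}$ induces isomorphisms on all homotopy groups.

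Finally, by \cite[Theorem 14.2.4]{HL16}, both $X^{an}$ and $Y^{an}$ have the homotopy type of CW-complexes, so Whitehead's theorem promotes the weak homotopy equivalence to a genuine homotopy equivalence. I do not expect any serious obstacle: the only thing to be careful about is that Smale's theorem is a statement about connected spaces and requires metrizability plus a countable dense subset, which is precisely what forces the hypothesis on $K$ via \cite{HLP14}; the rest is bookkeeping with already-established theorems.
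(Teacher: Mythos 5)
Your proposal is correct and follows essentially the same route as the paper's own proof: apply \cite[Theorem 1.1]{HLP14} to get local compactness, metrizability, and a countable dense subset, invoke the Vietoris--Begle--Smale theorem via the contractibility of the fibers $(f^{an})^{-1}(p)\cong X_{k(p)}^{an}$, and then upgrade the weak homotopy equivalence to a homotopy equivalence by Whitehead's theorem using the fact that analytifications have CW homotopy type. You are somewhat more careful than the paper in explicitly handling connected components and citing \cite[Theorem 14.4.1]{HL16} for local contractibility, but the structure and key inputs are identical.
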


\begin{proof} By Theorem \ref{HLP14T11}, $X^{an}$ and $Y^{an}$ are locally compact metrizable, and have a countable dense subset. Since $X$ and $Y$ are projective, $f$ is proper and therefore $f^{an}$ is proper. Thus, by the Vietoris-Begle-Smale mapping theorem (Theorem \ref{Sma58MainThm}), $f^{an}$ is a weak homotopy equivalence. Since $X^{an}$ and $Y^{an}$ have the homotopy type of a CW-complex, $f^{an}$ is a homotopy equivalence by Whitehead's theorem.
\end{proof}

\noindent{}We no longer suppose that $K$ has a countable dense subset.

\begin{definition} A surjective morphism $f:X \rightarrow Y$ of $K$-schemes \textbf{satisfies property $(\dagger)$} if there is a finite stratification $Y = \coprod_i Y_i$, with $Y_i$ locally closed, such that $f:  X \times_Y Y_i \rightarrow Y_i$ is a projective bundle of rank $r_i$ over $Y_i$. 
\end{definition}
	
\noindent{}Let $K'$ be a subfield of $K$ and let $X'$ be a scheme over $K'$. For any extension $F$ of $K'$, let $X'_F$ denote $X' \otimes_{K'} F$.


\begin{lemma}\label{spreading out} Suppose we have a surjective morphism $f: X\rightarrow Y$ of projective $K$-varieties satisfying $(\dagger)$. Then, there exist a non-trivally valued non-Archimedean subfield $K' \subset K$, a model $X'$ (resp. $Y'$) over $K'$ of $X$ (resp. $Y$), and a morphism $f'_{K'} :X' \rightarrow Y'$ such that the following holds.
\begin{enumerate}
	\item The field $K'$ has a countable dense subset.
	\item The models $X'$ and $Y'$ are projective.
	\item The morphism $f'_{K'}$ is surjective and satisfies property $(\dagger)$.
	\item The morphism $f'_K = (f'_{K'} \otimes_{K'}K)$ is equal to $f$.
\end{enumerate}
\end{lemma}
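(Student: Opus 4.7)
The plan is to identify a countable subset of $K$ that carries all of the defining data of $f$, its stratification, and the projective bundle structures, and then take $K'$ to be the closure of the subfield those elements generate.

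First I would fix finite affine covers $Y=\bigcup_j V_j$ with $V_j=\Spec B_j$ and $X=\bigcup_k U_k$ with $U_k=\Spec A_k$, where each $B_j$ and $A_k$ is finitely generated over $K$; the gluing data and the morphism $f$ are then encoded in finitely many polynomial relations. Each locally closed stratum $Y_i$ in the finite stratification is cut out on each $V_j$ by the vanishing of finitely many elements of $B_j$ together with non-vanishing conditions. Each projective bundle $X\times_Y Y_i\rightarrow Y_i$, whether interpreted Zariski-locally trivially or as a Brauer--Severi variety, becomes trivial on some finite affine (or finite étale) refinement of the cover, and the resulting trivializations and transition cocycles are again described by finitely many polynomials. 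In total, only finitely many elements of $K$ appear as coefficients in the whole description of $f$ and the $(\dagger)$-data.

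Let $K_0\subset K$ be the subfield generated over the prime subfield by all of these coefficients; then $K_0$ is countable. Set $K'\subset K$ to be the topological closure of $K_0$ with respect to $|\cdot|_K$. As a closed subfield of the complete field $K$, $K'$ is itself complete, is a non-Archimedean subfield of $K$, and has $K_0$ as a countable dense subset, which is~(1). Since the chosen generators lie in $K_0\subset K'$, the same equations define quasi-projective $K'$-schemes $X'$ and $Y'$ and a morphism $f'_{K'}\colon X'\rightarrow Y'$, yielding~(2), together with a stratification $Y'=\coprod_i Y'_i$ and the trivializations exhibiting $f'_{K'}$ as a projective bundle of rank $r_i$ on each $Y'_i$, yielding~(3). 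Finally, base change $-\otimes_{K'}K$ commutes with $\Spec$, with Zariski gluing, and with fiber products, so base changing these constructions recovers the affine charts, the morphism $f$, the stratification and the projective bundle structures, proving~(4).

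I do not expect a genuine obstacle here; the essential point is that quasi-projectivity, a finite stratification, and local trivializations of a projective bundle are each specified by a finite amount of algebraic data, which lives in a finitely generated (hence countable) subfield of $K$, and whose completion inside $K$ supplies the required complete subfield with countable dense subset. The only mildly delicate bookkeeping concerns the trivialization data on each stratum, which is addressed by first refining the stratum covers to finitely many opens on which the bundle trivializes and then including the finitely many cocycle coefficients into $K_0$.
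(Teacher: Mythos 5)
Your approach is fundamentally the same spreading-out argument as the paper's, but there is one point the paper handles carefully that you skip, and it is a genuine (if small) gap. The paper first fixes a nonzero element $\mathfrak{T}$ in the maximal ideal of the valuation ring $R$ of $K$ (so $0 < |\mathfrak{T}|_K < 1$) and forces it into the generating set before completing; this guarantees the resulting complete subfield is \emph{non-trivially} valued. Your $K_0$ is generated over the prime subfield (which has trivial valuation) by the finitely many coefficients of the defining data, and there is nothing that prevents all of those coefficients from having norm $1$. In that case $K_0$ would have trivial valuation, its topological closure $K'$ would equal $K_0$, and $K'$ would be trivially valued. This matters downstream: the proof of Theorem~\ref{projective fibers} applies Lemma~\ref{ber contra fiber} to $K'$, and that lemma rests on \cite{HLP14} and on Hrushovski--Loeser's tameness theorems \cite{HL16}, both of which are results about non-Archimedean fields with non-trivial norm. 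The fix is exactly what the paper does: adjoin one element $\mathfrak{T}$ with $0<|\mathfrak{T}|_K<1$ to $K_0$ before taking the closure.

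Beyond that, the two arguments differ only in presentation. You make the ``finitely many coefficients'' bookkeeping explicit (affine charts, gluing, stratification closed/open conditions, projective-bundle trivializations and cocycles), whereas the paper compresses all of this by observing that $K = \varinjlim_{\mathscr{T}} Q_{\mathscr{T}}$ over completions of finitely generated subfields and invoking the standard spreading-out theorem \cite[Th\'eor\`eme 8.10.5]{Gro66}. Both routes are valid; citing EGA is shorter, while your explicit accounting is more self-contained and makes clear which data must be descended. The remaining steps in your write-up — that the closure of a subfield of a complete field is a complete subfield with the subfield as a countable dense subset, and that base change $-\otimes_{K'}K$ commutes with gluing, fiber products, and the stratum/bundle structures — are correct.
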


\begin{proof} First, we show that there is a non-Archimedean subfield $Q$ of $K$ with a countable dense subset. Let $q = \chara(K)$, and let $\mathbb{F} = \mathbb{F}_q$ if $q$ is prime and $\mathbb{Q}$ otherwise. Then $\mathbb{F} \subset K$. Let $\mathfrak{T}$ be a non-zero element in the maximal ideal of the valuation ring $R$ of $K$, and let $Q$ be the completion of $\mathbb{F}(\mathfrak{T})$. Then $\mathbb{F}(\mathfrak{T})$ is a countable dense subset of $Q$.
	
Given a finite collection $\mathscr{T}:=\{T_1,...,T_n\}\subset K$, let $Q_{\mathscr{T}}$ be the completion of $Q(T_1,...,T_n)$. Then $Q_{\mathscr{T}}$ has a countable dense subset, i.e. the subfield $\mathbb{F}(\mathfrak{T},T_1,...,T_n)$. Since $K = \varinjlim Q_{\mathscr{T}}$, the lemma then follows from a spreading out argument (see \cite[Proposition 8.9.1 and Th\'eor\`eme 8.10.5]{Gro66}).
\end{proof}

{\renewcommand{\thetheorem}{\ref{projective fibers}} 
\begin{theorem} Suppose we have a surjective morphism $f: X\rightarrow Y$ of projective $K$-varieties satisfying $(\dagger)$. Then there is a finite extension $K\subset L$ such that $f^{an}_L:(X_L)^{an} \rightarrow (Y_L)^{an}$ is a homotopy equivalence.
	
Moreover, if we suppose the field $K$ has a countable dense subset, then we can take $L = K$.
\end{theorem}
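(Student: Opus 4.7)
The plan is to treat the two parts of the statement separately: I would first prove the \emph{moreover} clause (the case when $K$ has a countable dense subset), and then reduce the general statement to it via the spreading-out Lemma~\ref{spreading out}, at the cost of passing to a finite extension.

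First assume $K$ has a countable dense subset; my strategy is to apply Lemma~\ref{ber contra fiber}. Surjectivity of $f$ is immediate from $(\dagger)$, and properness of $f$ can be patched together from the properness of each projective bundle $f\colon X \times_Y Y_i \to Y_i$ combined with the finiteness of the stratification. The key step is the verification of contractibility of analytic fibers. A point $p \in Y^{an}$ has scheme-theoretic image in a unique stratum $Y_i$, so base change along $\Spec k(p) \to Y$ factors through $Y_i$ and
\[
(f^{an})^{-1}(p) \;\cong\; (X \times_Y \Spec k(p))^{an} \;\cong\; (\mathbb{P}^{r_i}_{k(p)})^{an}.
\]
The Berkovich analytification of a projective space over a complete non-Archimedean field is contractible (for example, via the toric deformation retraction of $(\mathbb{P}^{r_i})^{an}$ onto its simplicial skeleton), so Lemma~\ref{ber contra fiber} applies and gives the \emph{moreover} clause with $L = K$.

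For the general case I would apply Lemma~\ref{spreading out} to descend $f$ to a morphism $f'\colon X' \to Y'$ of quasi-projective $K'$-schemes satisfying $(\dagger)$, with $K' \subset K$ a non-Archimedean subfield having a countable dense subset and $f'_K = f$. By the case just established, $(f')^{an}\colon (X')^{an} \to (Y')^{an}$ is a homotopy equivalence.

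The final step is to transport this $K'$-level homotopy equivalence to one of $K$-analytic spaces, up to a finite extension. For this I would invoke \cite[Theorem~14.2.3]{HL16}, which, after a finite extension of the base field, provides definable strong deformation retractions of Berkovich analytifications onto skeleta that are compatible with base change. Choosing a finite extension $L$ of $K$ large enough that the retractions of $(X'_L)^{an}$ and $(Y'_L)^{an}$ are intertwined with those of $(X')^{an}$ and $(Y')^{an}$, the homotopy equivalence $(f')^{an}$ can be lifted through the retractions to a homotopy equivalence $f_L^{an}\colon (X_L)^{an} \to (Y_L)^{an}$. The main obstacle I anticipate is this final step: extracting from \cite[Theorem~14.2.3]{HL16} a single finite $L$ that works simultaneously for source, target, and morphism, and that also contains $K$. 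The fiber analysis and the spreading-out descent are comparatively clean once this model-theoretic input is in hand.
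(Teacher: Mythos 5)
Your first two steps match the paper's proof essentially exactly: the countable-dense-subset case via Lemma~\ref{ber contra fiber} (with the fiber identification $(f^{an})^{-1}(p) \cong (\mathbb{P}^{r_i}_{k(p)})^{an}$ and properness bootstrapped from properness of the fibers), followed by descent to a subfield $K' \subset K$ with a countable dense subset via Lemma~\ref{spreading out}.

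The gap is in your final transport step, and your instinct that it is the main obstacle is correct. You propose to first establish that $(f')^{an}\colon (X')^{an} \to (Y')^{an}$ is a homotopy equivalence over $K'$, and then lift it to the level of $L$ using \cite[Theorem~14.2.3]{HL16}. But that theorem does not compare $(X'_F)^{an}$ with $(X')^{an}$: it produces a \emph{finite extension} $L'$ of $K'$ and gives homotopy equivalences $(X'_F)^{an} \to (X'_{L'})^{an}$ and $(Y'_F)^{an} \to (Y'_{L'})^{an}$ for all further extensions $F \supset L'$. Nothing guarantees that passing from $K'$ to $L'$ preserves homotopy type, so the equivalence you proved over $K'$ cannot be directly "lifted through the retractions" to the $L$-level. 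The fix, which is how the paper proceeds, is to not use the $K'$-level homotopy equivalence at all: apply \cite[Theorem~14.2.3]{HL16} first to obtain $L'$, observe that $f'_{L'}$ still satisfies $(\dagger)$ and that $L'$ (being finite over $K'$) still has a countable dense subset, and \emph{re-run} the countable-dense case to conclude that $(f'_{L'})^{an}$ is a homotopy equivalence. Then take $L$ a finite extension of $K$ containing $L'$; the horizontal base-change maps $(X'_L)^{an} \to (X'_{L'})^{an}$ and $(Y'_L)^{an} \to (Y'_{L'})^{an}$ are homotopy equivalences by the choice of $L'$, and the two-out-of-three property in the resulting commutative square gives that $f_L^{an} = (f'_L)^{an}$ is a homotopy equivalence. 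In short: anchor the argument at $L'$, not at $K'$.
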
 
\addtocounter{theorem}{-1}}

\begin{proof} First, suppose $K$ has a countable dense subset. Then, given $p \in Y^{an}_i \subset Y^{an}$, let $$X^i_{k(p)}:=X_i \times_{Y_i} \Spec k(p).$$
By $(\dagger)$, we have $X_{k(p)} = X^i_{k(p)} \cong \mathbb{P}^{r_i}_{k(p)}$, and thus the fiber $(f^{an})^{-1}(p)$ is homeomorphic to $(\mathbb{P}^{r_i}_{k(p)})^{an}$, which is contractible. By Lemma \ref{ber contra fiber}, $f^{an}$ is a homotopy equivalence.

In general, we choose $K'$, $X'$, $Y'$ and $f'_{K'}$ as in Lemma \ref{spreading out}. By \cite[Theorem 14.2.3]{HL16}, there exists a finite extension $L'$ of $K'$ such that for all extensions $F$ of $L'$, the maps $(X'_{F})^{an} \rightarrow (X'_{L'})^{an}$ and $(Y'_{F})^{an} \rightarrow (Y'_{L'})^{an}$ are homotopy equivalences. Then, $f'_{L'}$ also satisfies $(\dagger)$, and thus $(f'_{L'})^{an}$ is a homotopy equivalence since $L'$, being finite over $K'$, has a countable dense subset.

Let $L$ be a finite extension of $K$ containing $L'$. Then $(X'_{L})^{an} \rightarrow (X_{L'})^{an}$ and $(Y'_{L})^{an} \rightarrow (Y_{L'})^{an}$ are homotopy equivalences by the choice of $L$. Hence $f^{an}_L = f^{an}_{L'} \otimes_{L'} L$ is a homotopy equivalence.
 \end{proof}


\section{Skeletons and Product of Strictly Polystable Models}\label{sec skel}

In this section, we assume the residue field $k$ of $K$ to be algebraically closed. Given $X$ over $K$, we say that a CW-complex $\Delta$ is a \textbf{skeleton} of $X^{an}$ if there is an inclusion $\Delta \hookrightarrow X^{an}$ and a deformation retraction $h_t : X^{an} \rightarrow X^{an}$ onto the image of $\Delta$. Let $R$ denote the valuation ring of $K$.
In this section, we review the construction from \cite{Ber99} of skeletons of $X^{an}$ given by strictly polystable $R$-models $\mathcal{X}$ of $X$.

\subsection{Skeletons of Strictly Polystable Models} From this point onward, fix a non-zero element $\mathfrak{T}$ in the maximal ideal of $R$.  A locally finitely presented formal scheme $\mathfrak{X}$ over $R$ is \textbf{strictly polystable}  if, for every $x\in \mathfrak{X}$, there is an affine neighborhood $\mathfrak{U}$ of $x$ such that the morphism $\mathfrak{U} \rightarrow \Spf R$ factors through an \'etale morphism $\mathfrak{U} \rightarrow \Spf B_0 \times..\times \Spf B_j$ where each $B_i$ is of the form $$R\{T_0,...,T_n\}/(T_0\cdot...\cdot T_n -a)$$ for some $a\in R$. A scheme $\mathcal{X}$ over $R$ is \textbf{strictly polystable} if its $\mathfrak{T}$-adic completion $\mathfrak{X}$ is strictly polystable.

Given $\mathfrak{X}$ strictly polystable, let $\mathfrak{X}_\eta$ denote the generic fiber of $\mathfrak{X}$ (in the category $K\mbox{-}\mathbf{An}$). In the paper mentioned above, Berkovich constructed a CW-complex $\Delta(\mathfrak{X})$ associated to $\mathfrak{X}$, called the \textbf{skeleton of $\mathfrak{X}$}. The complex $\Delta(\mathfrak{X})$ has a natural inclusion $\iota^{\mathfrak{X}}:\Delta(\mathfrak{X}) \hookrightarrow \mathfrak{X}_\eta$, and a there is an associated deformation retraction $h^{\mathfrak{X}}_t: \mathfrak{X}_\eta \rightarrow \mathfrak{X}_\eta$ onto the image of $\Delta(\mathfrak{X})$.

\begin{remark} Suppose we have a strictly polystable $R$-model $\mathcal{X}$ of a scheme $X$ over $K$, and suppose $\mathcal{X}$ is proper. Then, the generic fiber $\mathfrak{X}_\eta$ of the $\mathfrak{T}$-adic completion of $\mathcal{X}$ is in fact equal to $X^{an}$. Thus, $\Delta(\mathfrak{X})$ is a skeleton of $X^{an}$.
\end{remark}

\begin{example}\label{semistable} A scheme $\mathcal{X}$ over $R$ is called \textbf{strictly semistable} if for all $x\in \mathcal{X}$ there is a neighborhood $\mathcal{U}$ of $x$ such that the map $\mathcal{U} \rightarrow \Spec R$ factors through an \'etale morphism $\mathcal{U}\rightarrow \Spec B$ where $B$ is of the form $$R[T_0,...,T_{n+k}]/(T_0\cdot...\cdot T_n-a)$$ for some non-zero element $a$ in the maximal ideal of $R$.
	
Clearly being strictly semistable implies being strictly polystable. Let $\mathcal{X}_0$ denote the special fiber of $\mathcal{X}$. Then, $\mathcal{X}_0$ is a simple normal crossing divisor. Let $\{Z_i:i\in I\}$ be the set of irreducible components of $\mathcal{X}_0$. Let $\mathcal{P}(\mathcal{X}_0)$ be the poset with underlying set
\[\{W \subset \mathcal{X}_0: W\mathrm{\ is\ a\ irreducible\ component\ of} \cap_{i\in J} Z_i\mathrm{\ for\ any\ } J\subset I\}\]
and ordering given by reverse inclusion. Recall that the \textbf{dual complex} $\Delta(\mathcal{X}_0)$ of $\mathcal{X}_0$ is a $\Delta$-complex whose poset of faces is naturally isomorphic to $\mathcal{P}(\mathcal{X}_0)$. Let $\mathfrak{X}$ denote the $\mathfrak{T}$-adic completion of $\mathcal{X}$, then there is a canonical isomorphism $\Delta(\mathfrak{X}) \cong \Delta(\mathcal{X}_0)$.
 	
\end{example}


\subsection{Skeletons of Products} 

Let $\mathfrak{X}$ and $\mathfrak{Y}$ be two strictly polystable formal schemes over $R$. Since we assumed $K$ to be algebraically closed, by \cite[Lemma 3.16]{Ber99}, there is a canonical homeomorphism between the skeleton $\Delta(\mathfrak{X} \times \mathfrak{Y})$ of the fiber product $\mathfrak{X} \times \mathfrak{Y}$ and the product $\Delta(\mathfrak{X}) \times \Delta(\mathfrak{Y})$ of the skeletons of $\mathfrak{X}$ and $\mathfrak{Y}$. 

In particular, let $\mathfrak{X}^d$ denote the $d$-fold fiber product over $\Spf R$ of a strictly polystable formal scheme $\mathfrak{X}$. Then $\mathfrak{X}^d$ is strictly polystable, and there is a canonical homeomorphism $\Delta(\mathfrak{X}^d) \cong \Delta(\mathfrak{X})^d$ with the following properties.

\begin{proposition} \label{projection product}  Let $\pi_i^{\Delta}$ denote the projection of $\Delta(\mathfrak{X})^d$ onto its $i$-th factor. Then, by identifying $\Delta(\mathfrak{X})^d$ with $\Delta(\mathfrak{X}^d)$, we obtain the following commuting diagram where the left vertical arrow is given by the inclusion $\iota^{\mathfrak{X}^d}: \Delta(\mathfrak{X}^d) \hookrightarrow \mathfrak{X}^d_\eta$.
\begin{figure}[H]\label{prodinclusion}
			\includegraphics[scale=1]{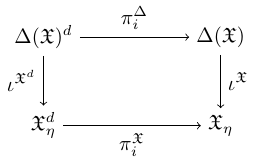}
\end{figure}
\noindent{}Moreover, let $r^{\mathfrak{X}}=h^{\mathfrak{X}}_1$ denote the retraction map from $\mathfrak{X}_\eta$ onto $\Delta(\mathfrak{X})$, and let $r^{\mathfrak{X}^d}=h^{\mathfrak{X}^d}_1$ denote the retraction map from $\mathfrak{X}^d_\eta$ onto $\Delta(\mathfrak{X}^d)\cong\Delta(\mathfrak{X})^d$. 

\begin{figure}[H]\label{prodretraction}
			\includegraphics[scale=1]{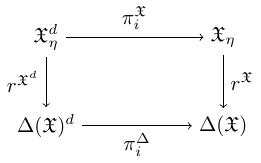}
\end{figure}
\end{proposition}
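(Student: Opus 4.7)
The plan is to deduce both diagrams from the theorem of Berkovich quoted above, applied to the coordinate projections $\pi_i^{\mathfrak{X}}:\mathfrak{X}^d\to\mathfrak{X}$, together with the explicit definition of the canonical homeomorphism $\Delta(\mathfrak{X}^d)\cong\Delta(\mathfrak{X})^d$.

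I first check that each $\pi_i^{\mathfrak{X}}$ is trivially polystable, which is essentially tautological once one reorders the factors to write $\mathfrak{X}^d\cong\mathfrak{X}\times_{\Spf R}\mathfrak{X}^{d-1}$: the \'etale-local charts of $\mathfrak{X}^d$ as a strictly polystable formal scheme are products of charts of $\mathfrak{X}$ and of $\mathfrak{X}^{d-1}$, so $\pi_i^{\mathfrak{X}}$ \'etale-locally has the form $\mathfrak{U}\times\mathfrak{V}\to\mathfrak{U}$ with $\mathfrak{U}$ strictly polystable. Applying Berkovich's theorem then gives
\[(\pi_i^{\mathfrak{X}})_\eta\circ h^{\mathfrak{X}^d}_t \;=\; h^{\mathfrak{X}}_t\circ(\pi_i^{\mathfrak{X}})_\eta\]
for all $t\in[0,1]$.

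Specializing to $t=1$, and using that the retraction fixes the skeleton pointwise, this identity forces $(\pi_i^{\mathfrak{X}})_\eta$ to map $\iota^{\mathfrak{X}^d}(\Delta(\mathfrak{X}^d))$ into $\iota^{\mathfrak{X}}(\Delta(\mathfrak{X}))$. The resulting map of skeletons, transported along the canonical homeomorphism $\Delta(\mathfrak{X}^d)\cong\Delta(\mathfrak{X})^d$, is by construction the coordinate projection $\pi_i^\Delta$. This yields the first diagram on the nose, and substituting the same identification into the displayed equation at $t=1$ yields the second diagram.

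The only content beyond invoking the theorem is the compatibility statement that the canonical identification $\Delta(\mathfrak{X}^d)\cong\Delta(\mathfrak{X})^d$ transports the restricted projection $(\pi_i^{\mathfrak{X}})_\eta$ on the skeleton to $\pi_i^\Delta$, and this is the step I expect to be the main obstacle. It is however a direct unfolding of Berkovich's local construction of the skeleton: on a standard polystable chart $\Spf(R\{T_0,\ldots,T_n\}/(T_0\cdots T_n-a))$ the skeleton is an explicit simplex, and a product chart for $\mathfrak{X}^d$ produces the product of the corresponding simplices with matching coordinate projections, so the compatibility holds chart-by-chart and glues.
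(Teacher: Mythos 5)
Your proposal is correct in outline but takes a genuinely different route from the paper on the first diagram. The paper's proof is essentially two citations: commutativity of the inclusion diagram is a special case of \cite[Theorem 8.2]{Ber99}, and commutativity of the retraction diagram then follows from \cite[Theorem 8.1.viii]{Ber99}. You instead attempt to obtain \emph{both} diagrams from Theorem 8.1.viii alone, via a fixed-point argument at $t=1$: since the retraction fixes the skeleton pointwise, the identity $(\pi_i^{\mathfrak{X}})_\eta\circ h^{\mathfrak{X}^d}_1 = h^{\mathfrak{X}}_1\circ(\pi_i^{\mathfrak{X}})_\eta$ forces $(\pi_i^{\mathfrak{X}})_\eta$ to carry $\Delta(\mathfrak{X}^d)$ into $\Delta(\mathfrak{X})$. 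That step is a nice observation and is sound. But you then need the restricted map $\Delta(\mathfrak{X}^d)\to\Delta(\mathfrak{X})$, transported along the canonical homeomorphism $\Delta(\mathfrak{X}^d)\cong\Delta(\mathfrak{X})^d$, to agree with $\pi_i^\Delta$, and here your ``by construction'' and subsequent chart-by-chart sketch is precisely the content the paper outsources to Theorem 8.2. In other words, your approach re-derives (in the special case of a $d$-fold self-product over $\Spf R$) what Theorem 8.2 packages, at the cost of unwinding Berkovich's local polysimplicial construction of the skeleton on standard charts $\Spf(R\{T_0,\ldots,T_n\}/(T_0\cdots T_n-a))$ and their products, and checking that the gluing is compatible. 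That verification is believable and probably routine, but as written it is the one place where a referee would want to see details spelled out, since the ``canonical homeomorphism'' in the proposition is not defined independently of the very compatibilities being asserted. Once you grant the first diagram, your derivation of the second diagram (substitute the skeleton identification into the $t=1$ specialization of Theorem 8.1.viii) is the same as the paper's, and the preliminary check that $\pi_i^{\mathfrak{X}}$ is trivially polystable is correct though the paper takes it as read.
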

\begin{proof} By \cite[Theorem 5.2.vii]{Ber99}, it suffices to show that the first diagram commutes. In other words, we need to show that the restriction $\pi^{\mathfrak{X}}_i |_{\Delta(\mathfrak{X})^d}$ is equal to $\pi_i^{\Delta}$, which follows from \cite[Theorem 5.4]{Ber99} and the functoriality of the identification $\Delta(\mathfrak{X}^d) \cong \Delta(\mathfrak{X})^d$ (see \cite[Lemma 3.16]{Ber99}). \end{proof}


\section{Skeletons of Symmetric Products} \label{sec sym} 
Again, we assume in this section that the residue field $k$ of $K$ is algebraically closed. Let $X$ denote a projective $K$-scheme with a proper strictly polystable $R$-model $\mathcal{X}$. For $d\geq 1$, the $d$-th symmetric group $S_d$ acts on $X^d$ by permuting the factors. The \textbf{$d$-th symmetric product} $\Sym^d(X)$ of $X$ is the scheme-theoretic quotient $X^d\sslash S_d$ in $K\mbox{-}\mathbf{Sch}$. Similarly, $S_d$ acts on the product $\mathcal{X}^d$ over $R$ by permuting the factors. The \textbf{$d$-th relative symmetric product} $\Sym^d(\mathcal{X})$ is the quotient $\mathcal{X}^d\sslash S_d$ in $R\mbox{-}\mathbf{Sch}$.

Let $\Sym^{d,an}(X)$ denote the analytification of $\Sym^d(X)$, and let $\mathfrak{Sym}^d$ denote the $\mathfrak{T}$-adic completion of $\Sym^d(\mathcal{X})$. Since $\Sym^d(\mathcal{X})$ is proper and is an $R$-model of $\Sym^d(X)$, one could hope to use $\mathfrak{Sym}^d$ to construct a skeleton for $\Sym^{d,an}(X) = \mathfrak{Sym}^d_\eta$. However, aside from a few special cases, such as when $X$ is a smooth curve with good reduction, $\Sym^d(\mathcal{X})$ is highly singular and far from being strictly polystable. 

Therefore, instead of working with $\mathfrak{Sym}^d$, we proceed by establishing the following correspondence theorem, which will allow us to realize the complex $\Sym^d(\Delta(\mathfrak{X}))$ as a skeleton of $\Sym^{d,an}(X)$. Let $G$ be any finite group acting on a projective $K$-variety $Y$. Recall that the action of $G$ on $Y$ induces an action of $G$ on the topological space $Y^{an}$; each $\sigma \in G$ can be viewed as an automorphism $\sigma: Y \rightarrow Y$, and the analytification $\sigma^{an}:Y^{an} \rightarrow Y^{an}$ is a homeomorphism.  Let $Y\sslash G$ denote the quotient in $K\mbox{-}\mathbf{Sch}$ of $Y$ by $G$, and let $Y^{an}/G$ denote the quotient in $\mathbf{Top}$ of $Y$ by $G$.

\begin{theorem}\label{analquotient} Let $\pi_G: Y \rightarrow Y\sslash G$ denote the quotient map, and let $\pi_G^{an}$ denote its analytification. Then there is a canonical homeomorphism $(Y\sslash G)^{an} \cong Y^{an}/G$, identifying $\pi_G^{an}$ with the quotient $Y^{an} \rightarrow Y^{an}/G$.
\end{theorem}

\subsection{Quotient by finite group actions}
We establish the above theorem by reducing it to the affine case. Let $A$ be a ring of finite type over $K$ with an action by a finite group $G$. Let $U$ denote $\Spec A$. Then, the action of $G$ on $A$ induces an action of $G$ on $U$, which in turn induces an action of $G$ on the analytificantion $U^{an}$.

Recall from \cite[Proposition V.1.1]{Gro71} that the quotient scheme $U\sslash G$ is equal to $\Spec A^G$ with $A^G$ denoting the $G$-invariant subring. The natural map $\pi_G: U \rightarrow U\sslash G$ is finite and $G$-equivariant, implying that the induced map $\pi^{an}_G: U^{an} \rightarrow (U/G)^{an}$ is finite and $G$-equivariant.

\begin{lemma}\label{lemmaquot} As a continuous map between topological spaces, the map $\pi^{an}_G$ is a quotient map, i.e. it is surjective and sends open sets to open sets. Moreover, $G$ acts transitively on the fibers of $\pi^{an}_G$.
\end{lemma}

\begin{proof} Surjectivity of $\pi^{an}_G$ follows from surjectivity of $\pi_G$. 
Recall now that the topology on $U^{an}$ is the subspace topology for the natural inclusion
\[U^{an} \lhook\joinrel\longrightarrow (\mathbb{R}_{\geq 0})^{A}.\]
Since $(U/G)^{an}$ is equal to $(\Spec A^G)^{an}$, we have the following diagram of continuous maps,
\begin{center}
\begin{figure}[H]
			\includegraphics[scale=1]{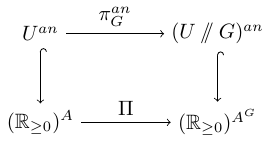}
\end{figure}
\end{center}
\vspace{-25pt}
where the map $\Pi$ is defined by sending $f \in (\mathbb{R}_{\geq 0})^{A}$ to $f |_{A^G}$.
For $a\in A$, let $\mathrm{ev}_{a,A}$ denote the evalution map $\mathrm{ev}_{a,A}:(\mathbb{R}_{\geq 0})^{A}\rightarrow \mathbb{R}_{\geq 0}$ defined by sending $f \in (\mathbb{R}_{\geq 0})^{A}$ to $f(a)$. A subbasis of the topology on $(\mathbb{R}_{\geq 0})^A$ is given by the set
\[\{\mathrm{ev}_{a,A}^{-1}(\Omega): a \in A, \Omega \mathrm{\ open\ in\ }\mathbb{R}_{\geq 0}\}.\]
Similarly, for $a \in A^G$, let $\mathrm{ev}_{a,A^G}$ denote the evalution map $\mathrm{ev}_{a,A^G}:(\mathbb{R}_{\geq 0})^{A^G}\rightarrow \mathbb{R}_{\geq 0}$ defined by sending $f \in (\mathbb{R}_{\geq 0})^{A^G}$ to $f(a)$. A subbasis of the topology on $(\mathbb{R}_{\geq 0})^{A^G}$ is given by the set
\[\{\mathrm{ev}_{a,A^G}^{-1}(\Omega): a \in A^G, \Omega \mathrm{\ open\ in\ }\mathbb{R}_{\geq 0}\}.\]
Then, given $a\in A$ and $\Omega$ open in $\mathbb{R}_{\geq 0}$, if $a$ is not contained in $A^G$ we have that $\Pi(\mathrm{ev}_{a,A}^{-1}(\Omega))$ is equal to $(\mathbb{R}_{\geq 0})^{A^G}$, which is open in $(\mathbb{R}_{\geq 0})^{A^G}$. If $a$ is contained in $A^G$, then $\Pi(\mathrm{ev}_{a,A}^{-1}(\Omega))$ is equal to $\mathrm{ev}_{a,A^G}^{-1}(\Omega)$, which is also open in $(\mathbb{R}_{\geq 0})^{A^G}$. Therefore, $\Pi$ sends open sets to open sets, which implies that $\pi^{an}_G$ sends open sets to open sets.

Finally, we need to verify that $G$ acts transitively on the fibers of $\pi^{an}_G$. This is established in Step 1 of the proof of \cite[Theorem 3.1]{Han16} (in the context of adic spaces). We summarize the argument here for completeness. Let $p$ denote a norm $|\cdot|_{p}: A^{G} \rightarrow \mathbb{R}_{\geq 0}$ in $(U/G)^{an}$. Let $\mathfrak{p}$ denote the kernel of $p$, then we can think of $p$ as a valuation of the fraction field $\mathfrak{F_p}=\mathrm{Frac}(A^G/\mathfrak{p})$.

By the going-up theorem, we can choose a prime ideal $\mathfrak{q}$ such that $\mathfrak{q} \cap A^G = \mathfrak{p}$. Let $G_\mathfrak{q}$ denote the stabilizer of $\mathfrak{q}$, and $L$ denote the fraction field $L=\mathrm{Frac}(A/\mathfrak{q})$. Then, $G$ acts transitively on primes $\mathfrak{q}$ such that $\mathfrak{q} \cap A^G = \mathfrak{p}$, $L$ is a normal algebraic extension of $\mathfrak{F_p}$, and the induced map $G_\mathfrak{q} \rightarrow \mathrm{Aut}(L/\mathfrak{F_p})$ is surjective (see \cite[Proposition V.1.1]{Gro71}). By \cite[Corollary VI.7.3]{SZ60}, $\mathrm{Aut}(L/\mathfrak{F_p})$ acts transitively on valuations of $L$ extending $p$, which implies that $G_{\mathfrak{q}}$ acts transitively on such extensions of $p$.\end{proof} 

\noindent{}Let $Y$ be a projective variety over $K$ with an action by a finite group $G$. Let $Y\sslash G$ denote the scheme-theoretic quotient of $X$ by $G$. The action of $G$ on $Y$ induces an action of $G$ on $Y^{an}$, and as noted above, the maps $\pi_G: X \rightarrow X\sslash G$ and $\pi_G^{an}: X^{an} \rightarrow (X\sslash G)^{an}$ are $G$-equivariant. 

By considering $Y^{an}$ as a topological space, we can take the topological quotient $Y^{an}/ G$. Let $\kappa_G:Y^{an}\rightarrow Y^{an}/ G$ denote the associated quotient map. Since $\pi_G^{an}$ is $G$-equivariant, we have an induced continuous map 
\[\iota_G: Y^{an}/ G \rightarrow (Y\sslash G)^{an}\]
such that $\iota_G \circ \kappa_G = \pi_G^{an}$. To establish Theorem \ref{analquotient}, it suffices to show that $\iota_G$ is a homeomorphism.
\begin{proof}[Proof of Theorem \ref{analquotient}] We have the following diagram.

\begin{center}
\begin{figure}[H]
			\includegraphics[scale=1]{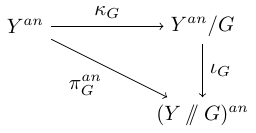}
\end{figure}
\end{center}
\vspace{-25pt}
Since $\pi_G^{an}$ is surjective, so is $\iota_G$. Let $\Omega$ denote an open set in $Y^{an}/ G$. Choose an affine open cover $\{U^G_i\}$ of $Y\sslash G$. Since $\pi_G$ is finite, each preimage $U_i = \pi^{-1}_G(U^G_i)$ is a $G$-invariant affine open subscheme of $Y$ with $U^G_i = U_i \sslash G$. Since $\kappa_G$ is continuous, $\kappa_G^{-1}(\Omega)$ is open. By Lemma \ref{lemmaquot}, each restriction $\pi_G^{an}|_{U_i^{an}}$ is a quotient map and therefore sends each $\kappa_G^{-1}(\Omega) \cap U_i^{an}$ to an open subset $\Omega_i^G$ of $(U^G_i)^{an}$. It follows from the equality
\[\pi_G^{an}(\kappa_G^{-1}(\Omega)) = \bigcup_i \Omega_i^G\]
that $\pi_G^{an}(\kappa_G^{-1}(\Omega))$ is open. Since $\iota_G(\Omega) =\pi_G^{an} (\kappa^{-1}(\Omega))$, we obtain that $\iota_G$ is a quotient map.

To conclude the proof, we need to show that $\iota_G$ is injective. Let $z$ be a point in $(Y \sslash G)^{an}$, and suppose we have $y$ and $y'$ in $Y^{an}$ such that $\pi_G^{an}(y) = \pi_G^{an}(y')=z$. Choose an affine open $U^G \subset Y\sslash G$ such that $z\in (U^G)^{an}$. Let $U$ denote the preimage $\pi_G^{-1}(U^G)$. Then, $y$ and $y'$ are contained in $U^{an}$. Since $U^G = U\sslash G$, by Lemma \ref{lemmaquot} we have that $G$ acts transitively on the fibers of $\pi^{an}_G|_{U^{an}}:U^{an} \rightarrow (U^G)^{an}$. In particular, there is an element $\sigma \in G$ such that $\sigma \cdot y = y'$, which implies that $\kappa_G(y) = \kappa_G(y')$. Finally, since $\kappa_G$ is surjective and the choices of $y$ and $y'$ were arbitrary, $\iota_G$ is injective.
\end{proof}


\subsection{The quotient of the skeleton} 
Let $X$ be a projective $K$-scheme with a proper strictly polystable $R$-model $\mathcal{X}$. Let $X^{d,an}$ denote the analytification of $X^d$. Since the $\mathfrak{T}$-adic completion $\mathfrak{X}^d$ of $\mathcal{X}^d$ is strictly polystable, Proposition \ref{projection product} realizes the complex $\Delta(\mathfrak{X})^d$ as a skeleton of $X^{d,an} = \mathfrak{X}^d_\eta$. By invoking Theorem \ref{analquotient}, we can now avoid working with $\mathfrak{Sym}^d$, and instead construct a skeleton of $\Sym^{d,an}(X)$ by showing that the deformation retraction $h^{\mathfrak{X}^d}_t$ from $X^{d,an}$ onto $\Delta(\mathfrak{X})^d$ is $S_d$-invariant.

Choose $\sigma \in S_d$. The corresponding automorphism $\sigma: \mathcal{X}^d \rightarrow \mathcal{X}^d$ induces, via $\mathfrak{T}$-adic completion, an automorphism $\sigma^{\mathfrak{X}}: \mathfrak{X}^d \bij \mathfrak{X}^d$, which in turn induces an automorphism $\sigma^{\mathfrak{X}}_\eta: \mathfrak{X}^d_\eta \bij \mathfrak{X}^d_\eta$. Since $\mathfrak{X}^d_\eta= X^{d,an}$, this defines an action of $S_d$ on $X^{d,an}$, which agrees with the action on $X^{d,an}$ discussed previously.

By \cite[Theorem 5.2.vii]{Ber99}, $\sigma^{\mathfrak{X}}_\eta: \mathfrak{X}^d_\eta \bij \mathfrak{X}^d_\eta$ restricts itself to a homeomorphism $\sigma^{\Delta}: \Delta(\mathfrak{X})^d \bij \Delta(\mathfrak{X})^d$. This defines an action of $S_d$ on $\Delta(\mathfrak{X})^d$.

\begin{lemma}\label{permuting factors} The above action of $S_d$ on $\Delta(\mathfrak{X})^d$ is precisely the action of $S_d$ on $\Delta(\mathfrak{X})^d$ given by permuting the factors. In particular, $\Delta(\mathfrak{X})^d/S_d = \Sym^d(\Delta(\mathfrak{X}))$.
\end{lemma}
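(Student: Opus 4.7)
The plan is to exploit the product structure of $\Delta(\mathfrak{X})^d$ and the fact that the permutation $\sigma$ interacts predictably with the projections onto factors. A point in $\Delta(\mathfrak{X})^d$ is determined by its images under the $d$ projections $\pi_i^\Delta$, so it suffices to show that $\pi_i^\Delta \circ \sigma^\Delta$ equals $\pi_{\sigma^{-1}(i)}^\Delta$, which is exactly the relation characterizing the permutation action by $\sigma$ on the product $\Delta(\mathfrak{X})^d$.

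I would begin at the scheme level. The automorphism $\sigma: \mathcal{X}^d \to \mathcal{X}^d$ permuting factors satisfies, by its very definition, $\pi_i \circ \sigma = \pi_{\sigma^{-1}(i)}$ for each $i$. This identity is preserved by $\mathfrak{T}$-adic completion (yielding $\pi_i^{\mathfrak{X}} \circ \sigma^{\mathfrak{X}} = \pi_{\sigma^{-1}(i)}^{\mathfrak{X}}$) and by passing to generic fibers (yielding $(\pi_i^{\mathfrak{X}})_\eta \circ \sigma^{\mathfrak{X}}_\eta = (\pi_{\sigma^{-1}(i)}^{\mathfrak{X}})_\eta$ as selfmaps of $X^{d,an}$).

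Next I would restrict everything to the skeleton using Proposition \ref{projection product}. That proposition says the projection $(\pi_i^{\mathfrak{X}})_\eta: \mathfrak{X}_\eta^d \to \mathfrak{X}_\eta$, when restricted along the inclusion $\iota^{\mathfrak{X}^d}: \Delta(\mathfrak{X})^d \hookrightarrow \mathfrak{X}_\eta^d$, agrees with the inclusion $\iota^{\mathfrak{X}}$ precomposed with the factor projection $\pi_i^\Delta$. Applying this on both sides of the equation $(\pi_i^{\mathfrak{X}})_\eta \circ \sigma^{\mathfrak{X}}_\eta = (\pi_{\sigma^{-1}(i)}^{\mathfrak{X}})_\eta$, and using that $\sigma^{\mathfrak{X}}_\eta$ preserves $\Delta(\mathfrak{X})^d$ (so that the restriction is precisely $\sigma^\Delta$), I obtain
\[
\pi_i^\Delta \circ \sigma^\Delta \;=\; \pi_{\sigma^{-1}(i)}^\Delta
\]
for every $i \in \{1,\ldots,d\}$. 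Since the family $(\pi_i^\Delta)_{i=1}^d$ jointly separates points of the product $\Delta(\mathfrak{X})^d$, this forces $\sigma^\Delta$ to be the permutation action, as desired. The statement $\Delta(\mathfrak{X})^d/S_d = \Sym^d(\Delta(\mathfrak{X}))$ is then immediate from the definition of the symmetric product as the quotient by the permutation action.

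The main obstacle, such as it is, is really bookkeeping: making sure the compatibility $(\pi_i)_{\mathrm{scheme}} \circ \sigma = \pi_{\sigma^{-1}(i)}$ survives intact through completion, generic fiber, and restriction to the skeleton. The genuine content is contained in Proposition \ref{projection product} (i.e., Berkovich's Theorem 8.1.viii), which already gives both that $\sigma^{\mathfrak{X}}_\eta$ stabilizes $\Delta(\mathfrak{X})^d$ and that the analytic projections restrict to the factor projections on the skeleton. Once those two facts are invoked, the argument reduces to the trivial observation that permutations of a Cartesian product are determined by their effect on the coordinate projections.
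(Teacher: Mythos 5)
Your proof is correct and takes essentially the same route as the paper's: establish $\pi_i \circ \sigma = \pi_{\sigma^{\pm 1}(i)}$ at the scheme/formal level, restrict to the skeleton via Proposition \ref{projection product}, and conclude that $\sigma^\Delta$ is the coordinate-permutation action because the projections separate points. The only cosmetic difference is the convention for how $\sigma$ permutes coordinates (the paper writes $\pi_i^\Delta \circ \sigma^\Delta = \pi_{\sigma(i)}^\Delta$, you write $\pi_{\sigma^{-1}(i)}^\Delta$); since the quotient is by the whole group $S_d$ this has no effect on the conclusion.
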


\begin{proof} Given $\sigma \in S_d$ and $1 \leq i \leq d$, the composition $\pi^{\mathfrak{X}}_i \circ \sigma^{\mathfrak{X}}_\eta : \mathfrak{X}^d_\eta \rightarrow \mathfrak{X}_\eta$ is equal to $\pi^\mathfrak{X}_{\sigma(i)}$. By Proposition \ref{projection product}, $\pi^\Delta_i \circ \sigma^\Delta$ is equal to the restriction $\pi^{\mathfrak{X}}_i |_{\Delta(\mathfrak{X})^d} \circ \sigma^{\mathfrak{X}}_\eta |_{\Delta(\mathfrak{X})^d} = (\pi^{\mathfrak{X}}_i \circ \sigma^{\mathfrak{X}}_\eta)|_{\Delta(\mathfrak{X})^d}$. Therefore $\pi^\Delta_i \circ \sigma^\Delta = \pi^\Delta_{\sigma(i)}$.
	
Let $p = (p_1,...,p_d)\in \Delta(\mathfrak{X})^d$. Then we have \[\pi^\Delta_i(\sigma^\Delta(p))= \pi^\Delta_{\sigma(i)}(p) = p_{\sigma(i)}.\]
Hence $\sigma^\Delta (p_1,...,p_d) = (p_{\sigma(i)},...,p_{\sigma(d)})$, which is the desired statement.
\end{proof}

\noindent{}By Theorem \ref{analquotient} and Lemma \ref{permuting factors}, the natural inclusion $\Delta(\mathfrak{X})^d/S_d \hookrightarrow \mathfrak{X}^d_\eta / S_d$ defines an inclusion $\Sym^d(\Delta(\mathfrak{X})) \hookrightarrow \Sym^{d,an}(X)$, which leads us to the following.

\begin{theorem}\label{symskel}There is a natural inclusion $\iota^{S_d}: \Sym^d(\Delta(\mathfrak{X})) \hookrightarrow \Sym^{d,an}(X)$ and a deformation retraction $h^{S_d}_t:\Sym^{d,an}(X) \rightarrow \Sym^{d,an}(X)$ onto the image of $\Sym^d(\Delta(\mathfrak{X}))$. Moreover, let $$r^{S_d} =h^{S_d}_1: \Sym^{d,an}(X) \rightarrow \Sym^d(\Delta(\mathfrak{X}))$$ denote the retraction map. Then the following diagram commutes
\begin{figure}[H]\label{diagramskeletons}
			\includegraphics[scale=1]{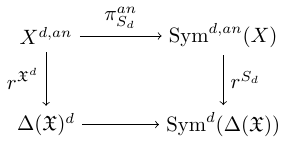}
\end{figure}
\noindent{}where $\Delta(\mathfrak{X})^d \rightarrow \Sym^d(\Delta(\mathfrak{X}))$ is the natural quotient map.
\end{theorem}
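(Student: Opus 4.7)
The plan is to descend the deformation retraction $h^{\mathfrak{X}^d}_t$ through the $S_d$-quotient, using Berkovich's identification $\Sym^{d,an}(X) \cong X^{d,an}/S_d$ from \cite[Corollary 5]{Ber95}. By Lemma \ref{permuting factors}, the induced $S_d$-action on $\Delta(\mathfrak{X})^d$ is the factor-permuting action, so $\Delta(\mathfrak{X})^d/S_d = \Sym^d(\Delta(\mathfrak{X}))$ canonically. Passing to quotients in the $S_d$-equivariant inclusion $\iota^{\mathfrak{X}^d}: \Delta(\mathfrak{X})^d \hookrightarrow X^{d,an}$ then yields the required inclusion $\iota^{S_d}: \Sym^d(\Delta(\mathfrak{X})) \hookrightarrow \Sym^{d,an}(X)$.

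The central step is to verify that $h^{\mathfrak{X}^d}_t$ is itself $S_d$-equivariant. For each $\sigma \in S_d$, the automorphism $\sigma^{\mathfrak{X}}: \mathfrak{X}^d \to \mathfrak{X}^d$ is trivially polystable, since any isomorphism is globally isomorphic to the projection $\Spf R \times \mathfrak{X}^d \to \mathfrak{X}^d$, whose first factor is (trivially) strictly polystable. Applying \cite[Theorem 8.1.viii]{Ber99} to $\sigma^{\mathfrak{X}}$ therefore gives
\[\sigma^{\mathfrak{X}}_\eta \circ h^{\mathfrak{X}^d}_t = h^{\mathfrak{X}^d}_t \circ \sigma^{\mathfrak{X}}_\eta,\]
which is precisely the required equivariance; note that Lemma \ref{permuting factors} already invoked this compatibility in the special case $t=1$.

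Given equivariance, $h^{\mathfrak{X}^d}_t$ descends to a continuous homotopy $h^{S_d}_t$ on $\Sym^{d,an}(X)$. Continuity is automatic because $S_d$ is finite, so $[0,1] \times X^{d,an} \to [0,1] \times \Sym^{d,an}(X)$ is a quotient map. The defining properties of a strong deformation retraction pass to the quotient verbatim: $h^{S_d}_0 = \mathrm{id}$; the image of $h^{S_d}_1$ lies in $\Delta(\mathfrak{X})^d/S_d = \Sym^d(\Delta(\mathfrak{X}))$; and $h^{S_d}_t$ fixes $\Sym^d(\Delta(\mathfrak{X}))$ pointwise since $h^{\mathfrak{X}^d}_t$ fixes $\Delta(\mathfrak{X})^d$. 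Finally, commutativity of the displayed diagram is tautological from this construction of $r^{S_d} = h^{S_d}_1$: both paths $X^{d,an} \to \Sym^d(\Delta(\mathfrak{X}))$ are obtained from $r^{\mathfrak{X}^d}$ by factoring through the $S_d$-quotients on source and target, hence coincide. The only substantive ingredient is the equivariance above, which follows cleanly from Berkovich's compatibility theorem, so I do not anticipate any serious obstacle.
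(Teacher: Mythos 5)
Your proposal is correct and follows essentially the same route as the paper's own proof: descend $h^{\mathfrak{X}^d}_t$ through the $S_d$-quotient using $\Sym^{d,an}(X)\cong X^{d,an}/S_d$ from \cite[Corollary 5]{Ber95} together with Lemma \ref{permuting factors}, and obtain the equivariance $\sigma^{\mathfrak{X}}_\eta\circ h^{\mathfrak{X}^d}_t = h^{\mathfrak{X}^d}_t\circ\sigma^{\mathfrak{X}}_\eta$ from \cite[Theorem 8.1.viii]{Ber99}. The only extra content you add is the explicit (and correct) check that an automorphism $\sigma^{\mathfrak{X}}$ of $\mathfrak{X}^d$ is trivially polystable, a hypothesis the paper applies silently.
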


\begin{proof}The inclusion $\iota^{S_d}$ has already been constructed. Now, consider the deformation retraction $h^{\mathfrak{X}^d}_t: \mathfrak{X}^d \rightarrow \mathfrak{X}^d$ onto the image of $\Delta(\mathfrak{X})^d$. By \cite[Theorem 5.2.vii]{Ber99}, for all $\sigma \in S_d$ and $t \in [0,1]$, we have $\sigma^\mathfrak{X}_\eta \circ h^{\mathfrak{X}^d}_t = h^{\mathfrak{X}^d}_t \circ \sigma^\mathfrak{X}_\eta$. Therefore, we obtain a family of functions $$h^{S_d}_t:= h^{\mathfrak{X}^d}_t/S_d : \mathfrak{X}^d_\eta/S_d \longrightarrow \mathfrak{X}^d_\eta/S_d$$
which gives deformation retraction from $\Sym^{d,an}(X)$ onto the image of $\Sym^d(\Delta(\mathfrak{X}))$. The commutativity of the diagram is immediate from the construction of $h_t^{S_d}$.
\end{proof}

\section{Tropicalizing the Abel-Jacobi map}\label{sectionabeljac}

Let $X$ be a smooth projective curve over $K$ of genus $g$. Assume that $X$ has a strictly semistable $R$-model $\mathcal{X}$; let $\mathfrak{X}$ denote the $\mathfrak{T}$-adic completion of $\mathcal{X}$. As discussed in Example \ref{semistable}, there is a canonical identification between the skeleton $\Delta(\mathfrak{X})$ and the dual graph $G(\mathcal{X}_0)$ of the special fiber $\mathcal{X}_0$ of $\mathcal{X}$.

Each edge $e$ in $G(\mathcal{X}_0)$ corresponds uniquely to a node $x_e$ of $\mathcal{X}_0$, and each $x_e$ has a neighborhood $\mathcal{U}_e$ admitting an \'etale morphism
\[\mathcal{U}_e \rightarrow \Spec R[x,y]/(xy-a_e)\]
for some non-zero $a_e$ in the maximal ideal of $R$. Set $\ell_\mathcal{X}(e) := - \log|a_e|_K$. The pair $(G(\mathcal{X}),\ell_\mathcal{X})$ defines a metric graph $\Gamma = \Gamma(\mathcal{X})$, namely the \textbf{tropicalization} of $X$ with respect to $\mathcal{X}$. Since $\Gamma$ is naturally homeomorphic to $\Delta(\mathfrak{X})$, we have a tropicalization map $X^{an} \rightarrow \Gamma$ given by the retraction $r^{\mathfrak{X}}$ of $X^{an}$ onto $\Delta(\mathfrak{X})$. For details and references on skeletons and tropicalizations of non-Archimedean curves, see \cite{Ber90,BPR13}.


\subsection{Retraction of divisors} Assume, for the rest of the section, that the residue field $k$ of $R$ is algebraically closed. Let $L$ be an extension of $K$, and let $B \subset L$ be its valuation ring. Then $\mathcal{X}_B:=\mathcal{X} \otimes_R B$ is a strictly semistable $B$-model of $X_L$, and there is an canonical isomorphism between $\Gamma$ and the tropicalization of $X_L$ with respect to $\mathcal{X}_B$. In particular, we have a map $r_L: X_L(L) \rightarrow \Gamma$, given by composing the tropicalization map from $X^{an}_L$ onto $\Gamma$ with the natural map $X_L(L) \rightarrow X^{an}_L$. Extending $r_L$ by linearity gives maps $$r^d_L:\Div^d_L(X_L)\rightarrow \Div^d(\Gamma),$$ where $\Div^d_L(X_L)$ is the set of degree $d$ divisors on $X_L$ supported on $X(L)$.


Suppose now that $L$ is algebraically closed, then $\Div^d_L(X_L) = \Div^d(X_L)$. Since $X$ is a smooth projective curve, we have an inclusion $\Sym^d(X)(L) \hookrightarrow \Div^d(X_L)$, realizing $\Sym^d(X)(L)$ as the subset of degree $d$ effective divisors.

\begin{proposition}\label{symdiv} The following diagram commutes, where $r^{S_d}_L$ is given by composing the retraction map $r^{S_d}: \Sym^{d,an}(X) \rightarrow \Sym^d(\Gamma)$ with the natural map $\Sym^d(X)(L) \rightarrow \Sym^{d,an}(X)$.
\begin{figure}[H]
			\includegraphics[scale=1]{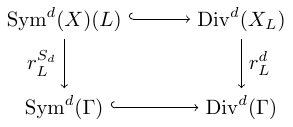}
\end{figure}
\end{proposition}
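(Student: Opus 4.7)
The plan is to reduce the commutativity of the square to a pointwise identity and then combine Proposition \ref{projection product} with Theorem \ref{symskel}, after base changing along $R \subset B$.

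First, I would note that $r^d_L$ is defined by extending $r_L$ linearly, and that the inclusion $\Sym^d(\Gamma) \hookrightarrow \Div^d(\Gamma)$ identifies a formal sum $q_1 + \cdots + q_d$ in $\Sym^d(\Gamma)$ with the effective divisor $\sum_i q_i$. So, for an effective divisor $D = p_1 + \cdots + p_d$ with $p_i \in X_L(L)$, traversing the square right-then-down produces the effective divisor $r_L(p_1) + \cdots + r_L(p_d)$. It therefore suffices to show that $r^{S_d}_L(D)$ equals this same formal sum in $\Sym^d(\Gamma)$.

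Second, I would base change the skeleton construction to $B$. The scheme $\mathcal{X}_B = \mathcal{X} \otimes_R B$ is a strictly semistable $B$-model of $X_L$, whose $\mathfrak{T}$-adic completion $\mathfrak{X}_B$ is strictly polystable with skeleton canonically identified with $\Gamma$; moreover the retraction $r^{\mathfrak{X}_B} \colon X_L^{an} \to \Gamma$ is compatible with $r^{\mathfrak{X}} \colon X^{an} \to \Gamma$ under the natural continuous map $X_L^{an} \to X^{an}$. In particular the map $r_L \colon X_L(L) \to \Gamma$ is obtained from the composition $X_L(L) \to X_L^{an} \xrightarrow{r^{\mathfrak{X}_B}} \Gamma$, and $r^{S_d}_L$ is obtained analogously via $\Sym^d(X)(L) = \Sym^d(X_L)(L) \to \Sym^{d,an}(X_L) \xrightarrow{r^{S_d}} \Sym^d(\Gamma)$.

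Third, I would lift $D$ to the ordered tuple $(p_1,\ldots,p_d) \in X_L^d(L)$ and track it through two commuting squares for $\mathfrak{X}_B$. Proposition \ref{projection product}, applied to $\mathfrak{X}_B$, implies that the retraction $r^{\mathfrak{X}_B^d} \colon X_L^{d,an} \to \Gamma^d$ factors as the product of retractions in each factor, so it sends $(p_1,\ldots,p_d)$ to $(r_L(p_1),\ldots,r_L(p_d))$. Then Theorem \ref{symskel}, again applied to $\mathcal{X}_B$, gives the commuting square
\[
\begin{array}{ccc}
X_L^{d,an} & \longrightarrow & \Sym^{d,an}(X_L) \\
\downarrow\scriptstyle r^{\mathfrak{X}_B^d} & & \downarrow\scriptstyle r^{S_d} \\
\Gamma^d & \longrightarrow & \Sym^d(\Gamma)
\end{array}
\]
Chasing $(p_1,\ldots,p_d)$ around this square, its image in $\Sym^d(\Gamma)$ along the bottom path is $r_L(p_1)+\cdots+r_L(p_d)$, while along the top-right path it is $r^{S_d}_L(D)$. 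This gives the required equality and thus the commutativity of the proposition's diagram.

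The only real obstacle is bookkeeping: checking that the skeleton $\Delta(\mathfrak{X})$, the retractions, the quotient by $S_d$, and the natural map from $L$-points into the analytification all behave functorially under the extension $R \subset B$, so that the two instances of Theorem \ref{symskel} and Proposition \ref{projection product} (one over $R$, one over $B$) are compatible. Once these identifications are in place, the argument is purely a diagram chase combining the two cited results.
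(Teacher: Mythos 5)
Your proof is correct and follows essentially the same strategy as the paper: lift the effective divisor to an ordered tuple in $X^d(L)$, and chase it through the commuting squares from Theorem \ref{symskel} and Proposition \ref{projection product}. The paper routes the chase through an enlarged diagram into $\Div^d$ and checks the outer square, whereas you verify the equality directly in $\Sym^d(\Gamma)$ and make explicit the base-change compatibility of the retractions under $R \subset B$, a point the paper uses implicitly; the substance is the same.
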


\begin{proof}Consider the following diagram, with the left square being induced by the commutative diagram from Theorem \ref{symskel}.
\begin{figure}[H]
				\includegraphics[scale=1]{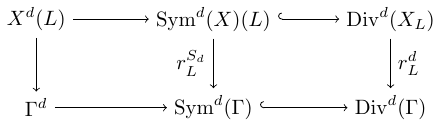}
\end{figure}
\noindent{}To show that the right square commutes, it suffices to show that the outer square commutes. For $1\leq i \leq d$, let $\pi^L_i:X^d(L)\rightarrow X(L)$ denote the projection onto the $i$th factor. Given $P \in X^d(L)$, let $P_i := \pi^L_i(P)$, and let $p_i:=r_L(P_i)$. Then $X^d(L) \rightarrow \Gamma^d$ sends $P$ to $(p_1,...,p_d)$, and $X^d(L) \rightarrow \Div^d(X_L)$ sends $P$ to $P_1+\cdots+P_d$. Hence the outer square commutes, as both $X^d(L) \rightarrow \Div^d(X_L) \rightarrow \Div^d(\Gamma)$ and $X^d(L) \rightarrow \Gamma^d \rightarrow \Div^d(\Gamma)$ takes $P$ to $p_1+\cdots+p_d$.
\end{proof}


\subsection{Abel-Jacobi} Let $J$ denote the Jacobian of $X$. Recall from \cite[Theorem 1.3]{BR13} that there is a natural inclusion $\iota^J$ from the Jacobian torus $J(\Gamma)$ into $J^{an}$, and a deformation retraction $h^J_t: J^{an} \rightarrow J^{an}$ onto the image of $J(\Gamma)$. 

Suppose that $X(K) \neq \emptyset$. Fix a basepoint $P\in X(K)$, and let $p:= r_K(P)$. Let $\alpha_P: X \rightarrow J$ (resp. $\alpha_p: \Gamma \rightarrow J(\Gamma)$) denote the Abel-Jacobi map based at $P$ (resp. the tropical Abel-Jacobi map based at $p$). Recall from \cite[Proposition 6.1]{BR13} that the following diagram commutes, where $r^J$ is the retraction map $J^{an} \rightarrow J(\Gamma)$.
\begin{figure}[H]
	\includegraphics[scale=1]{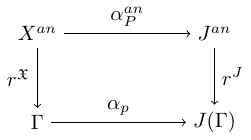}
\end{figure}

\noindent{}The map $\alpha_P$ induces an isomorphism $\widetilde{\alpha}_{P,d}: \Pic^d(X) \bij J$. Let $\alpha^{(d)}_P: \Sym^d(X) \rightarrow J$ denote the map given by composing $\widetilde{\alpha}_{P,d}$ with the natural map $\Sym^d(X) \rightarrow \Pic^d(X)$. We now establish the following generalization of \cite[Proposition 6.1]{BR13}.

\begin{proposition}\label{symabeljac}
Let $\alpha^{(d)}_p: \Sym^d(\Gamma) \rightarrow J(\Gamma)$ be the map  $p_1+\cdots+p_d \mapsto \alpha_p(p_1)+\cdots+\alpha_p(p_d)$. Then the following diagram commutes.
\begin{figure}[H]
	\includegraphics[scale=1]{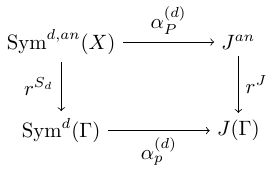}
\end{figure}
\end{proposition}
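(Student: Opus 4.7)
The plan is to reduce the commutativity of the square to the single-point case of \cite[Proposition 6.1]{BR13}, using linearity of both Abel--Jacobi maps through divisors together with Proposition \ref{symdiv}.

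First I would unpack the two vertical arrows. By the definition of $\alpha^{(d)}_P$ as $\widetilde{\alpha}_{P,d}$ composed with $\Sym^d(X) \to \Pic^d(X)$, and by the standard formula $\widetilde{\alpha}_{P,d}([D]) = [D - dP]$ for $D$ of degree $d$, any $P_1 + \cdots + P_d \in \Sym^d(X)(L)$ satisfies
$$\alpha^{(d)}_P(P_1 + \cdots + P_d) \;=\; [P_1 - P] + \cdots + [P_d - P] \;=\; \sum_{i=1}^{d} \alpha_P(P_i) \quad \text{in } J(L),$$
and by construction the tropical map is literally given by $\alpha^{(d)}_p(p_1 + \cdots + p_d) = \sum_i \alpha_p(p_i)$ in $J(\Gamma)$. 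Moreover, Proposition \ref{symdiv} identifies $r^{S_d}_L(P_1 + \cdots + P_d)$ with $p_1 + \cdots + p_d$, where $p_i := r_L(P_i)$, since the composition $\Sym^d(X)(L) \to \Sym^d(\Gamma) \hookrightarrow \Div^d(\Gamma)$ agrees with the linear extension $r^d_L$ of $r_L$ on the effective locus.

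Next I would invoke the fact that $r^J \colon J^{an} \to J(\Gamma)$ is a homomorphism of topological abelian groups. This is built into Berkovich's uniformization \cite[Theorem 6.5.1]{Ber90}: $J(\Gamma) = H_1(\Gamma,\mathbb{R})/H_1(\Gamma,\mathbb{Z})$ is the real-torus quotient that appears as a canonical factor of the analytification, and $r^J$ is the projection onto that quotient. Applying \cite[Proposition 6.1]{BR13} pointwise to each $P_i$ and then using this additivity of $r^J_L$, we compute
$$r^J_L\bigl(\alpha^{(d)}_P(P_1 + \cdots + P_d)\bigr) = \sum_{i=1}^{d} r^J_L\bigl(\alpha_P(P_i)\bigr) = \sum_{i=1}^{d} \alpha_p(p_i) = \alpha^{(d)}_p(p_1+\cdots+p_d) = \alpha^{(d)}_p\bigl(r^{S_d}_L(P_1+\cdots+P_d)\bigr),$$
which is precisely the desired commutativity.

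The main obstacle is justifying the group-homomorphism property of $r^J$ cleanly from the cited literature. If it is not already available in exactly the required form, one can bypass it by factoring $\alpha^{(d)}_P$ through the morphism of schemes $X^d \xrightarrow{\alpha_P^d} J^d \xrightarrow{\Sigma} J$ and its $S_d$-quotient: BR13 applied to each factor gives the compatibility on $X^d(L) \to J(L)^d$, Theorem \ref{symskel} together with Lemma \ref{permuting factors} handles the passage to symmetric products, and the summation map $\Sigma$ between abelian varieties with polystable reduction commutes with the skeletal retractions by the trivially polystable framework of Section \ref{sec skel}. Either route yields the same conclusion.
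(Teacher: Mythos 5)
Your proposal is structurally the same as the paper's argument: both work at the level of $L$-points, both use Proposition \ref{symdiv} to identify the left part of the diagram, and both ultimately lean on the Baker--Rabinoff compatibility. The main difference is one of packaging. Where the paper simply cites the proof of \cite[Theorem 6.1]{BR13} to commute the ``divisor-level'' right square, you instead apply \cite[Proposition 6.1]{BR13} pointwise to each $P_i$ and supply the additivity of $r^J$ yourself; this is exactly what that citation delegates to BR13, and your honest flagging of the group-homomorphism property of $r^J$ as the place where care is needed is well placed. Your second, more structural route (factoring $\alpha^{(d)}_P$ through $X^d \to J^d \to J$ and invoking the trivially polystable machinery from \S\ref{sec skel}) is not what the paper does but is also a reasonable way to discharge the same obligation.

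One small gap worth flagging: your computation establishes commutativity only on the image of $\Sym^d(X)(L)$ inside $\Sym^{d,an}(X)$ for a fixed $L$, and the proposition is a statement about the full analytic diagram. The paper closes this with the explicit observation that every point of $\Sym^{d,an}(X)$ lies in the image of $\Sym^d(X)(L)$ for some extension $L$ (together with continuity of all maps involved). Your framework is clearly set up so that this is the intended conclusion, but the step should be stated rather than left implicit, since it is what converts a family of $L$-point identities into an equality of continuous maps on $\Sym^{d,an}(X)$.
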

\begin{proof} Let $L$ be an extension of $K$. Without loss of generality, we replace $L$ by its algebraic closure. Consider the following diagram.
\begin{figure}[H]
	\includegraphics[scale=1]{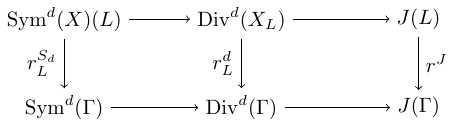}
\end{figure}
\noindent{}The left square commutes by Proposition \ref{symdiv}; for the right square, see the proof of \cite[Proposition 6.1]{BR13}. Therefore the outer and top squares of the following commute for all $L$.
\begin{figure}[H]
	\includegraphics[scale=1]{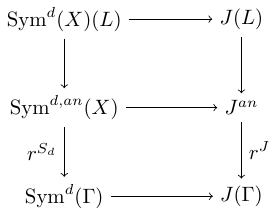}
\end{figure}
\noindent{}The commutativity of the bottom square now follows, as every points $x \in \Sym^{d,an}(X)$ lies in the image of $\Sym^d(X)(L)$ for some $L$.
\end{proof}


\section{Non-Archimedean Lefschetz}\label{finalsec} We continue to follow the notations of \S \ref{sectionabeljac}. Throughout this section, $K$ is assumed to be algebraically closed. In particular, $X(K)\neq \emptyset$, the residue field $k$ is algebraically closed, and $X$ always admits a strictly semistable $R$-model $\mathcal{X}$. Fix $\mathcal{X}$.


\subsection{Tropicalizing effective divisors} Fix a basepoint $P\in X(K)$, and let $p:= r_K(P)$. Let $W_d \subset J$ denote the image of the map $\alpha^{(d)}_P$. 

Given $r \geq 0$, let $W^r_d \subset \Pic^d(X)$ denote the locus of divisor classes of degree $d$ and of rank at least $r$, which is a closed subscheme of $\Pic^d(X)$. Recall that for $r=0$, we have an identification of $W^0_d$ with $W_d$ given by the bijection $\widetilde{\alpha}_{P,d}: \Pic^d(X) \bij J$. For more details on the construction and properties of $W^r_d$, we refer the reader to \cite{ACGH85,Fla11}. 

Consider now the locally closed subschemes $\omega^r := W^r_d \setminus W^{r+1}_d$. By \cite[Lemma IV.3.5]{ACGH85}, we have that $\omega^{r+1}$ is contained in the closure of $\omega^r$.  
Let $\omega^r_P:= \widetilde{\alpha}_{P,d}(\omega^r)$. It follows from our observations that the disjoint union $\coprod_r \omega^r_P$ defines a finite stratification of $W_d$.

\begin{lemma}\label{ansymwd} The map $\alpha^{(d),an}_P:\Sym^{d,an}(X) \rightarrow W^{an}_d$ is a homotopy equivalence.
\end{lemma}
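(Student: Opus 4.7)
The plan is to apply Theorem \ref{projective fibers} to the morphism $\alpha^{(d)}_P\colon \Sym^d(X)\rightarrow W_d$. Both spaces are quasi-projective ($W_d\subset J$ is closed in the Jacobian), and the stratification $W_d=\coprod_r \omega^r_P$ introduced just above is finite, so the content of the lemma reduces to verifying property $(\dagger)$ stratum by stratum.

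Over a class $\widetilde{\alpha}_{P,d}([D])\in \omega^r_P$, the fiber of $\alpha^{(d)}_P$ is identified (via the inclusion of effective divisors into $\Div^d(X)$) with the complete linear series $|D|\cong \mathbb{P}^r$. To upgrade this constant-fiber-dimension fact to an actual projective bundle on $\Sym^d(X)\times_{W_d}\omega^r_P$, I would invoke the standard Brill--Noether construction: a Poincar\'e line bundle $\mathcal{P}$ exists on $X\times \Pic^d(X)$ since $X(K)\neq \emptyset$, and if $\pi\colon X\times \omega^r\to \omega^r$ denotes the projection, then $\pi_{*}(\mathcal{P}|_{X\times \omega^r})$ is locally free of rank $r+1$ by cohomology and base change, since $h^0=r+1$ is constant on $\omega^r$ and (because $X$ is a curve) $h^1$ is then also constant by Riemann--Roch. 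The relative projectivization of this vector bundle is canonically identified with $\Sym^d(X)\times_{W_d}\omega^r_P \to \omega^r_P$, giving a projective bundle of rank $r$.

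With $(\dagger)$ verified, Theorem \ref{projective fibers} produces a finite extension $L/K$ over which $\alpha^{(d),an}_{P,L}$ is a homotopy equivalence. But $K$ is assumed algebraically closed throughout \S\ref{sectionabeljac} and the present section, so the only finite extension of $K$ is $K$ itself; hence $\alpha^{(d),an}_P\colon \Sym^{d,an}(X)\rightarrow W_d^{an}$ is a homotopy equivalence.

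The main technical point is the projective-bundle structure over each Brill--Noether stratum; this is classical but warrants a careful statement over an arbitrary algebraically closed non-Archimedean $K$. Once this is in place, everything else is immediate from the machinery of \S\ref{secanal}, which packages Hrushovski--Loeser tameness together with the Vietoris--Begle--Smale theorem into a clean criterion that exactly matches our setup.
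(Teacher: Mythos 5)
Your proof is correct and follows the same route as the paper: stratify $W_d$ by the Brill--Noether strata $\omega^r_P$, recognize $\Sym^d(X)\to W_d$ as a projective bundle over each stratum, and invoke Theorem~\ref{projective fibers}. The paper asserts the projective-bundle structure on $\Sigma^r\to\omega^r_P$ without argument and leaves implicit that the finite extension $L$ in Theorem~\ref{projective fibers} is trivial over algebraically closed $K$; you supply both of those details (Poincar\'e bundle plus Grauert, and $K=\bar{K}\Rightarrow L=K$), which is a welcome clarification rather than a divergence.
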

\begin{proof} Let $\Sigma^r \subset \Sym^d(X)$ denote the preimage of $\omega^r_P$. Given $x \in \omega^r_P$, let $[D]$ denote the corresponding divisor class in $\omega^r$, and let $D\in[D]$. Then the preimage of $x$ is equal to the complete linear series $|D| \subset \Sym^d(X)$. In particular $\Sigma^r \rightarrow \omega^r_P$ is a projective bundle of rank $r$. Therefore, $\Sym^d(X) \rightarrow W_d$ satisfies property $(\dagger)$ from \S \ref{subsectfiber}. By Theorem \ref{projective fibers}, the map $\Sym^{d,an}(X) \rightarrow W^{an}_d$ is a homotopy equivalence.
\end{proof}

As a consequence of Proposition \ref{symabeljac}, the map $r^J:J^{an} \rightarrow J(\Gamma)$ restricts itself to a map from $W^{an}_d$ to the image $W_d(\Gamma)$ of $\Sym^d(\Gamma)$. We now establish the following theorem.
{\renewcommand{\thetheorem}{\ref{tropicalize wd}} 
\begin{theorem} The map $W^{an}_d \rightarrow W_d(\Gamma)$ is a homotopy equivalence.
\end{theorem}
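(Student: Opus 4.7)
The plan is to derive the statement from the three homotopy equivalences already established, using the commutative square of Proposition \ref{symabeljac} as a bridge between the analytic and tropical sides. In essence the argument is a diagram chase; all of the substantive work has been done in the preceding sections.

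First I would verify that the map in question is well-defined. Since $W_d^{an}$ is the image of the surjection $\alpha^{(d),an}_P:\Sym^{d,an}(X) \twoheadrightarrow W_d^{an}$ and $W_d(\Gamma)$ is the image of $\alpha^{(d)}_p:\Sym^d(\Gamma) \twoheadrightarrow W_d(\Gamma)$, the commutativity of the diagram in Proposition \ref{symabeljac} shows that $r^J$ carries $W_d^{an}$ into $W_d(\Gamma)$. Restricting the square of Proposition \ref{symabeljac} then yields a commutative square
\[
\begin{array}{ccc}
\Sym^{d,an}(X) & \xrightarrow{\ \alpha^{(d),an}_P\ } & W_d^{an} \\
{\scriptstyle r^{S_d}}\downarrow & & \downarrow{\scriptstyle r^J} \\
\Sym^d(\Gamma) & \xrightarrow{\ \alpha^{(d)}_p\ } & W_d(\Gamma)
\end{array}
\]
whose right-hand vertical arrow is precisely the map we need to analyze.

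Next I would identify three of the four edges of this square as known homotopy equivalences: the top arrow by Lemma \ref{ansymwd}, the bottom arrow by Proposition \ref{trophomotopy}, and the left arrow $r^{S_d}$ by Theorem \ref{symskel}, since by construction $r^{S_d} = h^{S_d}_1$ is the endpoint of a deformation retraction from $\Sym^{d,an}(X)$ onto $\Sym^d(\Gamma)$.

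Finally, the composition down-then-across, $\alpha^{(d)}_p \circ r^{S_d}$, is a composition of two homotopy equivalences and is therefore itself a homotopy equivalence; by commutativity of the square it equals $r^J \circ \alpha^{(d),an}_P$. Since $\alpha^{(d),an}_P$ admits a homotopy inverse, postcomposing with such an inverse shows that $r^J|_{W_d^{an}} : W_d^{an} \to W_d(\Gamma)$ is itself a homotopy equivalence. No genuine obstacle arises at this stage; the real content sits upstream, in Lemma \ref{ansymwd} (which rests on Theorem \ref{projective fibers} and the contractibility of complete linear series), in Proposition \ref{trophomotopy}, and in the skeleton construction of Section \ref{sec sym}. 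The proof here simply assembles those ingredients via two-out-of-three.
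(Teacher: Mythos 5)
Your proof is correct and matches the paper's argument essentially verbatim: both restrict the commutative square of Proposition~\ref{symabeljac} to $W_d^{an}$ and $W_d(\Gamma)$, identify the top, left, and bottom arrows as homotopy equivalences via Lemma~\ref{ansymwd}, Theorem~\ref{symskel}, and Proposition~\ref{trophomotopy} respectively, and conclude by two-out-of-three.
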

\addtocounter{theorem}{-1}}
\begin{proof} From Proposition \ref{symabeljac}, we have the following commutative diagram.
	\begin{figure}[H]
		\centering
		\includegraphics[scale=1]{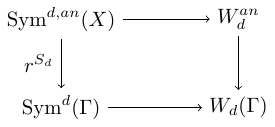}
	\end{figure}
\noindent{}By Lemma \ref{ansymwd}, the top arrow is a homotopy equivalence. By Theorem \ref{symskel}, the left arrow is a homotopy equivalence. Finally, by Proposition \ref{trophomotopy}, the bottom arrow is a homotopy equivalence, and thus the right arrow is a homotopy equivalence.
\end{proof}


\subsection{Proof of non-Archimedean Lefschetz} We now establish our Lefschetz hyperplane theorem for non-Archimedean Jacobians.

{\renewcommand{\thetheorem}{\ref{nonarch wd}} 
\begin{theorem} For $1 \leq d\leq g-1$, the pair $(J^{an},W^{an}_d)$ is $d$-connected.
\end{theorem}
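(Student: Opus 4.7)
The plan is to reduce Theorem \ref{nonarch wd} to its tropical counterpart Theorem \ref{tropical wd} by passing from the pair $(J^{an}, W_d^{an})$ to the pair $(J(\Gamma), W_d(\Gamma))$ via compatible deformation retractions onto skeletons. Since $K$ is algebraically closed, the smooth projective curve $X$ admits a strictly semistable $R$-model $\mathcal{X}$, so the setup of \S\ref{sectionabeljac} applies and we may form the tropicalization $\Gamma = \Gamma(\mathcal{X})$.

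The first ingredient is that $J(\Gamma)$ is a skeleton of $J^{an}$: by \cite[Theorem 6.5.1]{Ber90} (invoked already in \S\ref{sectionabeljac}) there is an inclusion $\iota^J: J(\Gamma) \hookrightarrow J^{an}$ and a deformation retraction $r^J: J^{an} \to J(\Gamma)$, so in particular $r^J$ is a homotopy equivalence. The second ingredient is Theorem \ref{tropicalize wd}, which asserts that the restriction of $r^J$ to $W_d^{an}$ is a homotopy equivalence onto $W_d(\Gamma)$. Combining these, we obtain a commutative square
\[
\begin{tikzcd}
W_d^{an} \arrow[r, hook] \arrow[d, "r^J"'] & J^{an} \arrow[d, "r^J"] \\
W_d(\Gamma) \arrow[r, hook] & J(\Gamma)
\end{tikzcd}
\]
in which the two vertical maps are homotopy equivalences.

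From this square I would deduce, via the naturality of the long exact sequence of homotopy groups of a pair and the five lemma, that the induced map $\pi_i(J^{an}, W_d^{an}) \to \pi_i(J(\Gamma), W_d(\Gamma))$ is an isomorphism for every $i \geq 0$. Theorem \ref{tropical wd} then gives $\pi_i(J(\Gamma), W_d(\Gamma)) = 0$ for $i \leq d$, concluding the argument.

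The principal technical difficulties have already been surmounted earlier in the paper: Theorem \ref{tropicalize wd} required the analysis of symmetric-product skeletons in \S\ref{sec sym} together with Theorem \ref{projective fibers}, and Theorem \ref{tropical wd} rested on Proposition \ref{trophomotopy}. What remains for Theorem \ref{nonarch wd} is therefore essentially a formal assembly, with the only nontrivial input being the compatibility of the non-Archimedean retraction $r^J$ with the tropical Abel-Jacobi structure, which is supplied by Proposition \ref{symabeljac}; this ensures that $r^J$ really does send $W_d^{an}$ onto $W_d(\Gamma)$ so that the square above is defined.
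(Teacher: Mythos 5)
Your argument is essentially identical to the paper's: the same commutative square with the skeleton retraction $r^J$ as vertical maps, Theorem~\ref{tropicalize wd} supplying the homotopy equivalence on $W_d^{an}$, and Theorem~\ref{tropical wd} giving $d$-connectedness of the tropical pair, which then transfers via the long exact sequence of the pair and the five lemma. The proposal is correct and takes the same route as the paper; if anything, you are slightly more explicit about which vertical map comes from \cite[Theorem 6.5.1]{Ber90} versus from Theorem~\ref{tropicalize wd}, and about why $r^J(W_d^{an})\subset W_d(\Gamma)$ (Proposition~\ref{symabeljac}).
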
 
\addtocounter{theorem}{-1}}

\begin{proof} Consider the following diagram.
	\begin{figure}[H]
		\centering
		\includegraphics[scale=1]{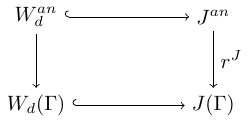}
		\qquad
	\end{figure}
\noindent{}By Theorem \ref{tropicalize wd}, both vertical arrows are homotopy equivalences. By Theorem \ref{tropical wd}, the pair $(J(\Gamma),W_d(\Gamma))$ is $d$-connected, and the theorem follows.
\end{proof}


\subsection*{Acknowledgement} I would like to thank Sam Payne for suggesting the problem and for his guidance and support. I am also grateful to Tyler Foster, Dhruv Ranganathan, Arseniy Sheydvasser and the referee for helpful comments and corrections. Finally, I benefited from many fruitful conversations with Melody Chan, David Jensen and Johannes Nicaise.

\bibliographystyle{abbrv}
\bibliography{lefschetz}

\end{document}